\newtheorem{theorem}{Theorem}[section]
\newtheorem{lemma}[theorem]{Lemma}
\newtheorem{corollary}[theorem]{Corollary}
\newtheorem{proposition}[theorem]{Proposition}
\newtheorem{definition}[theorem]{Definition}
\newtheorem{observation}[theorem]{Observation}
\DeclareMathOperator {\gp} {gp}
\DeclareMathOperator {\mono} {mp}
\let\deg\relax
\DeclareMathOperator {\deg} {deg}
\DeclareMathOperator{\cp}{\,\square\,}
\title{Monophonic position sets of Cartesian and lexicographic products of graphs}
\author{Ullas Chandran S. V.$^{a,}$\footnote{svuc.math@gmail.com}
\and
Sandi Klav\v{z}ar $^{b,c,d,}$\footnote{sandi.klavzar@fmf.uni-lj.si}
\and
Neethu P. K.$^{a,}$\footnote{p.kneethu.pk@gmail.com}
\and
James Tuite$^{e,f,}$\footnote{james.t.tuite@open.ac.uk} 
\\\\
$^{a}$\small Department of Mathematics, Mahatma Gandhi College, University of Kerala, \\ \small Thiruvananthapuram, India\\
$^{b}$\small Faculty of Mathematics and Physics, University of Ljubljana, Slovenia\\
$^{c}$ \small Institute of Mathematics, Physics and Mechanics, Ljubljana, Slovenia \\
$^{d}$ \small Faculty of Natural Sciences and Mathematics, University of Maribor, Slovenia\\
$^{e}$ \small School of Mathematics and Statistics, Open University, UK\\
$^{f}$ \small Department of Informatics and Statistics, Klaip\.{e}da University, Lithuania
}
\date{\today}
\begin{document}

\maketitle

\vspace*{-7mm}
\begin{abstract}
The general position problem in graph theory asks for the number of vertices in a largest set $S$ of vertices of a graph $G$ such that no shortest path of $G$ contains more than two vertices of $S$. The analogous monophonic position problem is obtained from the general position problem by replacing ``shortest path'' by ``induced path.'' In this paper the monophonic position number is studied on Cartesian and lexicographic products of graphs. It is proved that in Cartesian products, a monophonic position set can only be in one of three canonical forms, named layered, varied, and cliquey. The monophonic position number of an arbitrary Cartesian product is bounded from below and above. The two bounds coincide if neither of the factors has simplicial vertices. A formula for the monophonic position number of a lexicographic product is given which only contains the clique number and the structure of monophonic sets of the second factor. 
\end{abstract}

\noindent {\bf Key words:} general position set; general position number; monophonic position set; monophonic position number; Cartesian product; lexicographic product

\medskip\noindent

{\bf AMS Subj.\ Class:} 05C57; 05C69

\section{Introduction}
\label{sec:intro}
The general position problem for graphs originates in Dudeney's `Puzzle with Pawns' in his book `Amusements in Mathematics'~\cite{dudeney-1917} from 1917. This problem asked the reader to find the largest number of pawns that can be placed on a chessboard without any three pawns lying on a common straight line in the plane. This problem was generalised to the setting of graph theory independently in~\cite{ullas-2016, manuel-2018a} as follows: a set of vertices $S$ in a graph $G$ is in \emph{general position} if no shortest path of $G$ contains more than two vertices of $S$. The problem then consists of finding the largest set of vertices in general position for a given graph $G$. The structure of general position sets in graphs was described in~\cite{AnaChaChaKlaTho}.

The general position problem has been the subject of intensive research, with some 48 papers appearing on the subject since 2018. For some recent developments see~\cite{Araujo-2025, KlaSam, DiKlKrTu-2025, irsic-2024, KlaKriTuiYer-2023, Roy-2025, ChaDiSreeThoTui2024+, Thomas-2024a, powers, TianXuChao-2023, yao-2022}. A survey of the problem is given in~\cite{survey}. One case of particular interest is the general position number of product graphs. The general position problem for Cartesian products has been treated in many papers, including~\cite{KlaPatRusYero-2021, KlavzarRus-2021, KorzeVesel-2023, Kruft-2024, Tian-2021a, Tian-2021b}. General position numbers of strong and lexicographic products were investigated in~\cite{Dokyeesun-2024b}.

Several variations on the general position problem have also been considered. For example, we can replace `shortest path' in the definition of the problem by some other family of paths. In~\cite{KlaRalYer-2021} the authors restrict attention to shortest paths of bounded length, whilst~\cite{Haponenko-2024} considers the problem for the widest possible family, i.e. all paths. Another important family is the \emph{induced} or \emph{monophonic} paths. Partially inspired by the extensive literature on monophonic convexity (a recent example is~\cite{neethu-2024}), the \emph{monophonic position problem} was introduced in~\cite{Thomas-2024b}. Some extremal problems for monophonic position sets are discussed in~\cite{TuiThoCha-2023+}, smallest maximal monophonic position sets are treated in~\cite{DiKlKrTu-2025} and graph colourings in which every colour class is in monophonic position are explored in~\cite{ChaDiSreeThoTui2024+}. In the present article we explore the monophonic position problem for various graph products.

The plan of this paper is as follows. The next section contains the definitions we need, and we also call up some of the basic results needed later on. In Section~\ref{sec:Cartesian} we focus on the monophonic position number of Cartesian products. We prove that a monophonic position set of a Cartesian product must take one of three canonical forms, which we call layered, varied, and cliquey. We bound the monophonic position number of an arbitrary Cartesian product from below and above and prove that if neither of the factors has simplicial vertices, then the bounds coincide. We also give a different upper bound for Cartesian products of triangle-free graphs. In Section~\ref{sec:Lexicographic products} we turn our attention to lexicographic products and give a formula for the monophonic position number which involves the clique number and the structure of monophonic sets of the second factor.

\section{Preliminaries}\label{sec:preliminary}

We now define the terminology that will be used in this paper. By a graph $G = (V(G), E(G))$ we mean a finite, undirected, simple graph. We will write $u \sim v$ if vertices $u$ and $v$ are adjacent. The \emph{open neighbourhood} $N(u)$ of $u \in V(G)$ is $\{ v \in V(G): u \sim v\} $, whilst the \emph{closed neighbourhood} $N[v]$ is defined by $N[u] = N(u) \cup \{ u\} $. The \emph{degree} of a vertex is the number of vertices in its open neighbourhood; in particular, a vertex of degree one is a \emph{leaf}. The \emph{distance} $d(u,v)$ between two vertices $u$ and $v$ in a connected graph $G$ is the length of a shortest $u,v$-path in $G$, and any such shortest $u,v$-path is a \emph{geodesic}. A path $P$ in $G$ is \emph{induced} or \emph{monophonic} if $G$ contains no chords between non-consecutive vertices of $P$. For two distinct vertices $u,v$ of a graph $G$, the \emph{monophonic interval} $J_G[u,v]$ is the set of all vertices lying on at least one monophonic $u,v$-path.  

Recall that a set $S$ of vertices in a graph $G$ is a \emph{general position set} if no shortest path in $G$ contains more than two vertices of $S$. The number of vertices in a largest general position set of $G$ is called the \emph{general position number} of $G$ and is denoted by $\gp(G)$.  A set $M \subseteq V(G)$ is a \emph{monophonic position set} of $G$ if no three vertices of $M$ lie on a common monophonic path in $G$. The \emph{monophonic position number} or \emph{mp-number} $\mono(G)$ of $G$ is the number of vertices in a largest monophonic position set of $G$. Observe that every monophonic position set of a graph $G$ is also in general position, and hence $\mono(G) \leq \gp(G)$. Any pair of vertices is in monophonic position, so for graphs with order $n(G) \geq 2$ we have $2 \leq \mono(G) \leq n(G)$. Trivially $\mono(G) = n(G)$ holds for a connected graph $G$ if and only if $G$ is complete. It was shown in~\cite{Thomas-2024b, TuiThoCha-2023+} that for any $2 \leq a \leq b$ there exists a graph $G$ with $\mono (G) = a$ and $\gp (G) = b$. Interestingly, the largest possible number of edges in a graph with order $n$ and monophonic position number $a$ is quadratic in $n$ (with constructions close in size to the $a$-partite Tur\'{a}n graph with order $n$), whereas for fixed general position number the largest size is linear in $n$, see~\cite{TuiThoCha-2023+}. Thus the general and monophonic position problems are intrinsically different. 

We will also make use of the following terminology to simplify our arguments: a monophonic path containing three or more vertices of a set $S$ is \emph{$S$-bad} (or if $S$ is clear from the context, we will simply say \emph{bad}). Hence, if $S$ is assumed to be in monophonic position, then the appearance of a bad path constitutes a contradiction.

We will denote the subgraph of $G$ induced by a subset $S \subseteq V(G)$ by $G[S]$. A vertex is \emph{simplicial} if its neighbourhood induces a clique. We define $\sigma (G) = 1$ if $G$ contains at least one simplicial vertex and $\sigma (G) = 0$ otherwise. 

The \emph{clique number} $\omega (G)$ of $G$ is the number of vertices in a maximum clique in $G$ and the \emph{independence number} $\alpha (G)$ is the number of vertices of a maximum independent set. The path of order $\ell $ will be written as $P_{\ell}$ and the cycle of length $\ell $ as $C_{\ell }$. For any positive integer $k$, we fix $[k] = \{1,\dots,k\}$.

We will need the following result on monophonic position sets. 
\begin{lemma}
{\rm\cite[Lemma 3.1]{Thomas-2024b}}
\label{lemma:2.1}
Let $G$ be a connected graph and $M\subseteq V(G)$ be a monophonic position set. Then $G[M]$ is a disjoint union of $k$ cliques $G[M]=\bigcup_{i=1}^{k} W_{i}$. If $k\geq 2$, then for each $i\in [k]$ any two vertices of $W_i$ have a common neighbour in $G-M$.
\end{lemma}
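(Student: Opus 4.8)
\medskip
\noindent\textbf{Proof proposal.}\quad
I would split the statement into its two parts. For the first, note that claiming $G[M]$ is a vertex-disjoint union of cliques is the same as claiming $G[M]$ has no induced $P_3$, so it suffices to rule out an induced path $u$-$v$-$w$ in $G[M]$; but $G[M]$ is an induced subgraph of $G$, so such a path would be an induced --- hence monophonic --- path of $G$ carrying the three $M$-vertices $u,v,w$, i.e.\ an $M$-bad path, which is forbidden. Hence the components $W_1,\dots,W_k$ of $G[M]$ are cliques.

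For the second part, fix $W_i$, two distinct vertices $x,y\in W_i$ (the case $|W_i|=1$ being vacuous), and --- using $k\ge 2$ --- a vertex $z\in M\setminus W_i$. My plan is to prove that the second vertex $v_1$ of a geodesic $x=v_0,v_1,\dots,v_d=z$ is the desired common neighbour of $x$ and $y$ in $G-M$. Three things have to be checked: that $d\ge 2$ (because $x\not\sim z$, as an $M$-neighbour of $x$ would lie in the component $W_i$ of $G[M]$); that $v_1\notin M$ (because $v_1\in M$ would put $v_1$ in $W_i$, making the geodesic an $M$-bad path through $x,v_1,z$); and, the crux, that $v_1\sim y$.

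To prove $v_1\sim y$ I would argue by contradiction, splicing $y$ onto the front of the geodesic. Assuming $v_1\not\sim y$, first look at $y,x,v_1,\dots,v_d=z$: its vertices are distinct (a repetition would place $y$ on the geodesic and make it $M$-bad) and its only candidate chords are edges $yv_i$ with $i\ge 2$, since the geodesic is chordless. If there are none, this path is monophonic and $M$-bad through $x,y,z$ --- contradiction. Otherwise let $i_0\ge 2$ be least with $y\sim v_{i_0}$; the walk $y,v_{i_0},\dots,v_d=z$ then has length $d-i_0+1$, so $d(y,z)\le d-i_0+1$, while $x\sim y$ gives $d(y,z)\ge d-1$, forcing $i_0=2$ and making $y,v_2,\dots,v_d=z$ a geodesic (hence chordless). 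Now $x,y,v_2,v_3,\dots,v_d=z$ has no chords --- edges $xv_i$ with $i\ge 2$ are barred by the first geodesic, edges $yv_i$ with $i\ge 3$ by the second --- so it is a monophonic, $M$-bad path through $x,y,z$, the final contradiction.

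I expect the only fiddly point to be the bookkeeping in the last paragraph: one must keep straight which sub-paths are geodesics (and therefore chordless) to be sure the two spliced paths are genuinely induced, and the short degenerate cases --- notably $d=2$, where $v_2=z$ and the subcase $y\sim v_2$ cannot arise --- should be dispatched separately. The rest is routine.
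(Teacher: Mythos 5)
The paper does not prove this lemma at all --- it is imported verbatim from \cite[Lemma 3.1]{Thomas-2024b} --- so there is no in-paper argument to compare yours against; I can only judge your proof on its own terms, and it is correct. The first part (no induced $P_3$ in $G[M]$, since such a path would be induced in $G$ and hence an $M$-bad monophonic path) is the standard one-line argument. For the second part, your path-splicing is sound: $d\ge 2$ and $v_1\notin M$ follow exactly as you say, and in the crux the two cases are both watertight --- if $y$ has no neighbour among $v_2,\dots,v_d$ then $y,x,v_1,\dots,v_d$ is induced and $M$-bad, while if $i_0$ exists the triangle inequality really does force $i_0=2$ and $d(y,z)=d-1$, so that $y,v_2,\dots,v_d$ is a geodesic and the spliced path $x,y,v_2,\dots,v_d$ is chordless (the only candidate chords $xv_i$, $i\ge 2$, and $yv_i$, $i\ge 3$, are excluded by the two geodesics, and $y\sim z$ is impossible since $z\notin W_i$). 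The degenerate case $d=2$ is correctly absorbed into the first case. This is a complete, self-contained proof of the cited result.
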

Let $G$ and $H$ be graphs. In this paper we discuss the monophonic position number of the {\em Cartesian product} $G\cp H$ and the {\em lexicographic product} $G\circ H$. Both products have vertex set $V(G)\times V(H)$. Let $(g_1,h_1),(g_2,h_2)\in V(G)\times V(H)$. In the Cartesian product $G\cp  H$ the vertices $(g_1,h_1)$ and $(g_2,h_2)$ are adjacent if (i) $g_1 \sim g_2$ in $G$ and $h_1=h_2$ or (ii) $g_1=g_2$ and $h_1\sim h_2$ in $H$. In the lexicographic product $G\circ H$ these vertices are adjacent if (i) $g_1\sim g_2$ or (ii) $g_1=g_2$ and $h_1\sim h_2$. 

If $\ast \in \{\cp, \circ\}$, then the \emph{projection mappings} $\pi _G:G \ast H \rightarrow G$ and $\pi _H: G \ast H \rightarrow H$ are given by $\pi _G(u,v) = u$ and $\pi _H(u,v) = v$, respectively. If $S \subseteq V(G \cp H)$, then the set $\{g \in V(G) :\ (g,h) \in S \text{ for some } h \in V(H)\}$ is the \emph{projection} $\pi_G(S)$ of $S$ on $G$. The projection $\pi_H(S)$ of $S$ on $H$ is defined analogously. We adopt the following conventions. If $v_0,v_1,\dots ,v_{\ell -1},v_{\ell }$ is a path $Q$ in $H$, then for any $u\in V(G)$, by $_uQ$ we denote the path \[ (u,v_0),(u,v_1),\dots ,(u,v_{\ell -1}),(u,v_{\ell })\] 
in $G \ast H$. Similarly, if $u_0, u_1,\dots ,u_{\ell -1}, u_{\ell }$ is a path in $G$, then in $G \ast H$, the path 
$$(u_0,v),(u_1,v),\dots ,(u_{\ell -1},v),(u_{\ell },v)$$ 
is denoted by $P_v$. Furthermore, a tilde over a path will denote that it is traversed in the opposite direction; for example, if $P$ is the path $u_0,u_1,\dots ,u_{\ell }$, then $\widetilde{P}$ is the path $u_{\ell },\dots, u_1,u_0$. Finally, for $u \in V(G)$ and $v \in V(H)$ we define $^uH$ to be the subgraph of $G \ast H$ induced by $\{ u\} \times V(H)$, which we call a \emph{$H$-layer}, whilst the \emph{$G$-layer} $G^v$ is the subgraph induced by $V(G) \times \{ v\} $.

\section{Cartesian products}\label{sec:Cartesian}

In this section we investigate the monophonic position number of the Cartesian product of connected graphs. In the main result (Theorem~\ref{thm:general bounds}) we give sharp upper and lower bounds for $\mono(G\cp H)$ in general, and the exact formula for the case when the factors do not contain simplicial vertices. In Theorem~\ref{thm:bipartite} we also give a different upper bound in the case that both factors are triangle-free.   

\subsection{Structure of $\mono $-sets}

Note that if $S\subseteq V(G\cp  H)$, then due to the commutativity of the  Cartesian product, any result that holds for $\pi_{G}(S)$ is also valid for $\pi_{H}(S)$ and vice versa. We begin with the following straightforward lower bound for $\mono (G \cp H)$.

\begin{observation}\label{obs:lower bound}
For any graphs $G$ and $H$, $\mono(G \cp H) \ge \max \{ \omega (G),\omega (H)\}$.
\end{observation}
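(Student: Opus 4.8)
\emph{Proof proposal.} The plan is to exhibit, inside $G\cp H$, a clique of size $\omega(G)$ and a clique of size $\omega(H)$, and to observe that any clique is automatically a monophonic position set. Since every result about $\pi_G$ transfers to $\pi_H$ by commutativity of the Cartesian product, it suffices to produce a monophonic position set of size $\omega(G)$; the bound for $\omega(H)$ then follows symmetrically, and taking the maximum gives the claim.

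First I would recall the elementary fact that a clique $Q$ in any graph $G'$ is in monophonic position: if some induced path $P$ of $G'$ contained three vertices $x,y,z$ of $Q$ with, say, $y$ between $x$ and $z$ on $P$, then $x$ and $z$ would be non-consecutive vertices of $P$ that are nonetheless adjacent in $G'$ (being in the clique $Q$), contradicting the fact that $P$ has no chords. Hence no monophonic path of $G'$ meets $Q$ in more than two vertices.

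Next, fix a maximum clique $W$ of $G$, so $|W| = \omega(G)$, and fix an arbitrary vertex $h\in V(H)$. I claim $S = W\times\{h\}$ is a clique of $G\cp H$. Indeed, for distinct $g_1,g_2\in W$ we have $g_1\sim g_2$ in $G$ and the second coordinates agree, so $(g_1,h)\sim(g_2,h)$ in $G\cp H$ by condition~(i) of the definition of the Cartesian product; equivalently, $S$ lies in the $G$-layer $G^h$, which is isomorphic to $G$, and corresponds there to the clique $W$. By the previous paragraph, $S$ is a monophonic position set of $G\cp H$, whence $\mono(G\cp H)\ge |S| = \omega(G)$.

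Finally, applying the same argument to $H$ (or invoking commutativity of $\cp$) yields $\mono(G\cp H)\ge \omega(H)$, and combining the two inequalities gives $\mono(G\cp H)\ge\max\{\omega(G),\omega(H)\}$. I do not anticipate any genuine obstacle here: the only point requiring care is the routine verification that an induced path cannot contain three mutually adjacent vertices, which is immediate from the no-chord condition.
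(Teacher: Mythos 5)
Your argument is correct and is essentially the paper's proof unpacked: the paper cites the facts that $\mono(X)\ge\omega(X)$ for every graph $X$ and that $\omega(G\cp H)=\max\{\omega(G),\omega(H)\}$, while you verify both directly by placing a maximum clique of $G$ in a $G$-layer and checking that a clique is in monophonic position. No gaps.
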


\begin{proof}
The observation follows from the fact that for any graph $X$ we have $\mono(X)\ge \omega(X)$, and the clique number of the Cartesian product is given by $\omega(G\cp H) = \max\{\omega(G), \omega(H)\}$. 
\end{proof}

In order to prove the main result of this section, Theorem~\ref{thm:general bounds}, we need a sequence of lemmas.

\begin{lemma}
\label{lem:projection is in mp}
If $S$ is a monophonic position set of $G \cp  H$, then $\pi_{H}(S)$ is a monophonic position set of $H$.
\end{lemma}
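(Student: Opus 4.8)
The plan is to argue by contraposition: suppose that $\pi_H(S)$ is \emph{not} a monophonic position set of $H$, and produce a bad monophonic path in $G\cp H$, contradicting the assumption that $S$ is a monophonic position set. So assume there are three distinct vertices $h_1,h_2,h_3\in\pi_H(S)$, say with $h_2$ lying on a monophonic $h_1,h_3$-path $Q$ in $H$, and pick $g_1,g_2,g_3\in V(G)$ so that $(g_1,h_1),(g_2,h_2),(g_3,h_3)\in S$. The natural candidate for a bad path in $G\cp H$ is obtained by taking the $H$-path $Q$, lifting it into a suitable $H$-layer, and splicing on short $G$-layer segments at the ends so that the resulting walk actually passes through the three chosen vertices of $S$.

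\medskip

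The key steps, in order, would be the following. First, observe that the path $_{g_2}Q$ in the layer $^{g_2}H$ is monophonic in $G\cp H$: a chord of $_{g_2}Q$ would project under $\pi_H$ to a chord of $Q$ (edges within a single $H$-layer project isomorphically), so monophonicity of $Q$ gives monophonicity of $_{g_2}Q$. This path already contains $(g_2,h_2)\in S$ as an internal vertex. Second, I need the endpoints to be the vertices $(g_1,h_1)$ and $(g_3,h_3)$ of $S$ rather than $(g_2,h_1)$ and $(g_2,h_3)$. To fix the $h_1$-end, take any monophonic $g_2,g_1$-path $P$ in $G$ (possible since $G$ is connected) and prepend the $G$-layer path $P_{h_1}$; similarly append a $G$-layer path from $g_2$ to $g_3$ in layer $G^{h_3}$ to fix the other end. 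Third — and this is the real obstacle — the concatenation $P_{h_1}$ followed by $_{g_2}Q$ followed by $P'_{h_3}$ need not be an \emph{induced} path: there could be chords between the $G$-layer pieces and the $H$-layer piece, or the pieces could even intersect. To handle this cleanly I would not try to control the whole concatenation at once; instead I would pass to the subgraph of $G\cp H$ induced by the vertex set of this walk (or better, argue about the monophonic interval) and invoke the standard fact that if a connected graph contains a walk from $a$ to $b$ through $c$ lying inside a set $U$, and $c$ separates $a$ from $b$ in $G[U]$ in the appropriate sense, then there is an induced $a,b$-path through $c$. Concretely: since $_{g_2}Q$ is induced, the only possible chords involve at least one endpoint-correction segment; by choosing $P$ and $P'$ to be induced in $G$ and noting that a chord between $P_{h_1}$ and $_{g_2}Q\cup P'_{h_3}$ would force (via projecting to $H$) an edge of $H$ incident to $h_1$ that is a chord of $Q$ or collapses two vertices of $Q$, one rules these out; the remaining potential chords lie between $P_{h_1}$ and $P'_{h_3}$, which live in different $G$-layers $G^{h_1}$ and $G^{h_3}$ and hence are non-adjacent unless $h_1\sim h_3$, a case one disposes of separately (when $h_1\sim h_3$, $h_2$ cannot be internal to a monophonic $h_1,h_3$-path).

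\medskip

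I expect the bookkeeping around \emph{which} of $h_1,h_2,h_3$ is the middle vertex, and the degenerate overlaps (e.g. $g_1=g_2$, or $g_2$ already lies on $P$), to be the fiddly part rather than anything deep; the one genuinely substantive point is the induced-path extraction described above, which is exactly the kind of ``contract a bad walk to a bad monophonic path'' move that the authors have already set up language for with the term \emph{$S$-bad path}. It is worth checking whether the paper wants the cleaner route of working with monophonic intervals $J[u,v]$ directly: since $(g_2,h_2)$ will lie on \emph{some} induced $(g_1,h_1),(g_3,h_3)$-path once it lies on a connected subgraph joining them with no shorter induced bypass, and since $S$ being in monophonic position already forbids three of its vertices on any induced path, the contradiction is immediate once one exhibits \emph{any} induced path in $G\cp H$ through all three. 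So the streamlined proof is: build the walk, extract an induced subpath of it still passing through $(g_2,h_2)$ with endpoints $(g_1,h_1)$ and $(g_3,h_3)$ (using that the middle vertex is not adjacent to both ends, which is inherited from the situation in $H$), and conclude that this is an $S$-bad path, the desired contradiction.
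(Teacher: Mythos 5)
Your proposal is correct and takes essentially the same approach as the paper: lift the induced $h_1,h_3$-path into the layer $^{g_2}H$ of the middle vertex, splice induced $G$-layer paths onto its ends in $G^{h_1}$ and $G^{h_3}$, and verify directly that no chords arise between sections (using $h_1\not\sim h_3$; the paper handles the degenerate coincidences $g_1=g_2$, $g_1=g_3$, etc.\ in a short case table). The direct chord check you describe is all that is needed; the ``extract an induced path through a prescribed internal vertex from a walk'' principle you float as a streamlining is not valid in general and should be dropped in favour of that explicit verification.
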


\begin{proof} 
Suppose for a contradiction that $\pi_{H} (S)$ is not in monophonic position in $H$. Then there exists a set $S^{\prime} =\{v_1,v_2,v_3\} \subseteq \pi_{H}(S)$ such that there is an induced $v_1,v_3$-path $P$ in $H$ that passes through $v_2$. Since $\{v_1,v_2,v_3\} \subseteq \pi_{H}(S)$, there exist (not necessarily distinct) vertices $u_1,u_2,u_3$ of $G$ such that $(u_1,v_1)$ $(u_2,v_2)$ and $(u_3,v_3)$ belong to $S$; we will derive a contradiction by constructing a monophonic path in $G \cp H$ from $(u_1,v_1)$ to $(u_3,v_3)$ that passes through $(u_2,v_2)$. Let $Q$ and $R$ be monophonic paths in $G$ from $u_1$ to $u_2$ and from $u_2$ to $u_3$ respectively. Also, recall that $\widetilde{Q}$ is the reverse path of $Q$, i.e. the path formed by traversing $Q$ in the opposite direction from $u_2$ to $u_1$. Without loss of generality, there are four possibilities to consider: (i) $u_1$, $u_2$ and $u_3$ are pairwise distinct, (ii) $u_1 = u_2 \not = u_3$, (iii) $u_1 = u_3 \not = u_2$, and (iv) $u_1 = u_2 = u_3$. Note that the path $_{u_2}P$ passes through $(u_2,v_2)$.

 \begin{table}[ht]
\centering
\begin{tabular}{|c| c| c|c|}
\hline
Case & Section 1 & Section 2 & Section 3\\
\hline \hline
$u_1,u_2,u_3$ distinct & $Q_{v_1}$ & $_{u_2}P$ & $R_{v_3}\phantom{\begin{bmatrix} x \\ y \end{bmatrix}} \hspace*{-10mm}$ \\
\hline
$u_1 = u_2 \not = u_3$ & $_{u_1}P$ & $R_{v_3}$ & $\phantom{\begin{bmatrix} x \\ y \end{bmatrix}} \hspace*{-10mm}$ \\
\hline 
$u_1 = u_3 \not = u_2$ & $Q_{v_1}$ & $_{u_2}P$ & $\widetilde{Q}_{v_3}\phantom{\begin{bmatrix} x \\ x \end{bmatrix}} \hspace*{-10mm}$\\
\hline
$u_1 = u_2 = u_3$ & $_{u_1}P$ & $\phantom{\begin{bmatrix} x \\ y \end{bmatrix}} \hspace*{-10mm}$ & \\
\hline
\end{tabular}
\caption{Construction of $S$-bad paths}
\label{Tab:main lemma}
\end{table}
 The desired monophonic paths in $G \cp H$ are constructed by concatenating the paths in Table~\ref{Tab:main lemma} in order. Each section of the paths is monophonic by construction and it is easily verified that there are no chords between different sections of the displayed paths. Observe that $d_H(v_1,v_3) \geq 2$, so that there is no edge in $G \cp H$ between a vertex $(u,v_1)$ and $(u,v_3)$. 
 \end{proof}

 Note that the converse of Lemma~\ref{lem:projection is in mp} is not true. For a simple counterexample, consider $K_2 \cp K_2 \cong C_4$. Of course $V(C_4)$ is not in monophonic position, but both projections onto the first and second factors have order two, and so trivially are in monophonic position in the factors. The same observation holds using any pair of edges in the Cartesian product of any pair of non-empty graphs.    
 

 \begin{lemma}
 \label{lemma:either G or H layer}
 Let $G$ and $H$ be connected graphs and $S$ be a monophonic position set of $G\cp H$. If $(u,v)\in S$, then $V(^{u}{H})\cap S=\{ (u,v)\}$ or $V(G^v)\cap S=\{ (u,v)\}$.
 \end{lemma}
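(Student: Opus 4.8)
The plan is to argue by contradiction: suppose $(u,v) \in S$ but both layers through $(u,v)$ contain a second point of $S$, say $(u, v') \in V({}^u H) \cap S$ with $v' \neq v$, and $(u', v) \in V(G^v) \cap S$ with $u' \neq u$. I want to produce an $S$-bad path, i.e. a monophonic path of $G \cp H$ passing through all three of $(u',v)$, $(u,v)$, $(u,v')$. The natural candidate is to concatenate a monophonic $u',u$-path $P$ in $G$ (lifted into the layer $G^v$ as $P_v$) with a monophonic $v,v'$-path $Q$ in $H$ (lifted into the layer ${}^u H$ as ${}_u Q$). These two lifted paths meet exactly at the common vertex $(u,v)$, and their concatenation visits $(u',v)$, then $(u,v)$, then $(u,v')$ in that order.

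The key step — and the main obstacle — is verifying that this concatenated path $P_v \cup {}_u Q$ is \emph{induced} in $G \cp H$. Each half is monophonic by construction (lifting an induced path of a factor into a layer yields an induced path of the product, since the layer is an isometric and in fact "induced-path-preserving" subgraph). Chords \emph{within} one half are therefore excluded. The real work is ruling out chords \emph{between} the two halves. A vertex of $P_v$ other than $(u,v)$ has the form $(w, v)$ with $w \neq u$; a vertex of ${}_u Q$ other than $(u,v)$ has the form $(u, z)$ with $z \neq v$. By the definition of the Cartesian product, $(w,v)$ and $(u,z)$ are adjacent only if they agree in one coordinate — but $w \neq u$ and $z \neq v$, so they agree in neither coordinate, hence are non-adjacent. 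Thus no cross-chords exist, and the path is genuinely induced. (I should also note the endpoints $(u',v)$ and $(u,v')$ are distinct and non-adjacent for the same reason, and that all three vertices are pairwise distinct since $u \neq u'$ and $v \neq v'$.)

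This induced path contains three vertices of $S$, contradicting the assumption that $S$ is a monophonic position set. Hence at least one of the two layers through $(u,v)$ meets $S$ only in $(u,v)$ itself, which is exactly the claim. I expect this argument to be short; the only subtlety worth spelling out carefully is the lift-preserves-induced-paths fact and the cross-coordinate non-adjacency observation, both of which are immediate from the product's edge rule but should be stated explicitly to justify that $P_v \cup {}_u Q$ is $S$-bad.
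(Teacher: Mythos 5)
Your proposal is correct and follows exactly the paper's argument: lift a monophonic $u',u$-path to $P_v$ and a monophonic $v,v'$-path to ${}_uQ$, concatenate at $(u,v)$, and observe that no cross-chords can exist because any vertex $(w,v)$ with $w\neq u$ and any vertex $(u,z)$ with $z\neq v$ disagree in both coordinates. The paper states the conclusion more tersely, but your explicit verification of the induced-path property is precisely the justification it leaves implicit.
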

 
 \begin{proof} 
 Suppose that the result is not true, i.e. that there exist $u^{\prime } \not = u$ in $G$ and $v^{\prime } \not = v$ in $H$ such that $(u,v)$, $(u^{\prime },v)$ and $(u,v^{\prime })$ all belong to $S$. Let $P$ be a monophonic $u^{\prime },u$-path in $G$ and $Q$ a monophonic $v,v^{\prime }$-path in $H$. Then the concatenation of $P_v$ and $_{u}Q$ would be a monophonic $(u^{\prime },v),(u,v^{\prime })$-path in $G \cp H$ passing through $(u,v)$, a contradiction.  
\end{proof}

\begin{lemma}
\label{projection}
If $S = \{(u_i,v_i):\ i\in [r]\}$ is a monophonic position set of $G \cp H$ for some $r \geq 2$, then one of the following holds:
\begin{itemize}
    \item[$($a$)$] $S$ lies in a single $G$-layer, or $S$ lies in a single $H$-layer,
    \item[$($b$)$] $u_1,\dots ,u_{r}$ are distinct vertices of $G$ and $v_1,\dots ,v_{r}$ are distinct vertices of $H$, and neither of these sets induce a clique, or
    \item[$($c$)$] $\pi_G(S)$ is a clique of $G$ with order at least $2$ and $v_1,\dots ,v_{r}$ are distinct vertices of $H$, or $\pi_H(S)$ is a clique of $H$ with order at least $2$ and $u_1,\dots ,u_{r}$ are distinct vertices of $G$.
\end{itemize}
\end{lemma}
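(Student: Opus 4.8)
The plan is to reduce everything to a single bad-path gadget. Case~(a) is the degenerate case: if $|\pi_G(S)|=1$ then $S$ lies in a single $H$-layer, and if $|\pi_H(S)|=1$ then $S$ lies in a single $G$-layer, so from now on assume $|\pi_G(S)|\ge 2$ and $|\pi_H(S)|\ge 2$. By Lemma~\ref{lem:projection is in mp} and its mirror image, $\pi_G(S)$ and $\pi_H(S)$ are monophonic position sets, hence by Lemma~\ref{lemma:2.1} both $G[\pi_G(S)]$ and $H[\pi_H(S)]$ are disjoint unions of cliques. The engine is the following observation, which I would isolate first: \emph{if $(x,y_1),(x,y_2)\in S$ with $y_1\ne y_2$, and $(z,w)\in S$ with $z\ne x$ and $z\not\sim_G x$, then $S$ is not in monophonic position}. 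To prove it, pick a monophonic $y_1,y_2$-path $Q$ in $H$, a monophonic $x,z$-path $P$ in $G$ and a monophonic $y_2,w$-path $R$ in $H$, and concatenate $_xQ$, $P_{y_2}$ and $_zR$. This is a walk from $(x,y_1)$ through $(x,y_2)$ and $(z,y_2)$ to $(z,w)$; its three sections lie in the layers $^xH$, $G^{y_2}$ and $^zH$ respectively, so consecutive sections meet only in the stated junction vertices and the first and third are disjoint because $x\ne z$, making it a path. It is induced because each section is an induced path of $G\cp H$ and a check of the remaining pairs produces no chord: any two vertices from different non-consecutive sections differ in both coordinates (so are non-adjacent in a Cartesian product), and any would-be chord between a first-section vertex $(x,q)$ and a third-section vertex $(z,q)$ would force $x\sim_G z$. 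Hence this path is $S$-bad. The symmetric statement with the roles of $G$ and $H$ exchanged holds too.

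Granting the gadget, I would derive two facts. \textbf{(i)} If $\pi_G$ is not injective on $S$, then $\pi_G(S)$ is a clique: if $u_a=u_b=:x$ with $v_a\ne v_b$ and $\pi_G(S)$ were not a clique, there would be some $u_c$ in a different clique of $G[\pi_G(S)]$ than $x$, so $u_c\ne x$ and $u_c\not\sim_G x$, and the gadget applies with $(z,w)=(u_c,v_c)$. \textbf{(ii)} $\pi_G$ or $\pi_H$ is injective on $S$. Suppose not; then there are $u^\ast:=u_1=u_2$ with $v_1\ne v_2$ and $v^\ast:=v_3=v_4$ with $u_3\ne u_4$. Lemma~\ref{lemma:either G or H layer} applied to $(u^\ast,v_1)$ and to $(u^\ast,v_2)$ shows that $v_1$ and $v_2$ each occur exactly once as a second coordinate of a vertex of $S$, and symmetrically $u_3,u_4$ each occur exactly once as a first coordinate. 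Therefore $u^\ast,u_3,u_4$ are pairwise distinct, $v_1,v_2,v^\ast$ are pairwise distinct, and $A=(u^\ast,v_1)$, $B=(u^\ast,v_2)$, $C=(u_3,v^\ast)$, $D=(u_4,v^\ast)$ are all distinct. If one of $u^\ast\sim_G u_3$, $u^\ast\sim_G u_4$, $v^\ast\sim_H v_1$, $v^\ast\sim_H v_2$ fails, the gadget (or its mirror) applied to $\{A,B\}$ or to $\{C,D\}$ together with the appropriate third vertex gives a contradiction. So assume all four adjacencies hold. Then, routing through the Cartesian ``corner'' $(u_3,v_1)$, the sequence $A,(u_3,v_1),C,(u_3,v_2),B$ is an induced path when $v_1\not\sim_H v_2$ and $B,A,(u_3,v_1),C$ is an induced path when $v_1\sim_H v_2$; each contains three vertices of $S$, contradicting that $S$ is in monophonic position. (Inducedness is checked one pair at a time: every non-consecutive pair either differs in both coordinates, hence is a non-edge of the Cartesian product, or lies in a common layer and is a non-edge by the case we are in.)

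Putting it together, assume without loss of generality that $\pi_H$ is injective on $S$, so $v_1,\dots,v_r$ are distinct. If $\pi_G$ is also injective, then $u_1,\dots,u_r$ are distinct, and we are in case~(b) if neither $G[\pi_G(S)]$ nor $H[\pi_H(S)]$ is complete, and in case~(c) if one of them is complete (recall $|\pi_G(S)|,|\pi_H(S)|\ge 2$). If $\pi_G$ is not injective, then by fact~(i) $\pi_G(S)$ is a clique, necessarily of order at least $2$, and we are in case~(c). This exhausts the possibilities.

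The step I expect to be the genuine obstacle is the final case of fact~(ii), where all of $u^\ast\sim_G u_3$, $v^\ast\sim_H v_1$, etc.\ hold simultaneously: here no concatenation of monophonic layer-paths with a bridge of the gadget type can stay chordless, and one is forced to write down an explicit short induced path through a product corner, using essentially that two vertices of $G\cp H$ differing in both coordinates are never adjacent. A secondary technical point --- but a necessary one --- is the distinctness bookkeeping for the vertices occurring in those short paths, which is exactly why Lemma~\ref{lemma:either G or H layer} is invoked, to pin down which coordinates of $S$ are ``hit'' only once.
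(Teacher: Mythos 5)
Your proof is correct, and its engine --- lifting monophonic paths of the factors into layers and concatenating them so that the absence of chords forces adjacencies --- is exactly the paper's; your ``gadget'' is the paper's concatenation $_{u_1}Q$, $P_{v_2}$, $_{u_i}R$, abstracted and made reusable, and your fact~(i) reproduces the paper's first step (non-injective projection plus Lemma~\ref{lemma:2.1} forces a clique). Where you genuinely diverge is in the second half. The paper, having established that $\pi_G(S)$ is a clique, shows directly that the $v_i$ are distinct by routing a bad path along a clique edge $(u_i,v_i)\sim(u_j,v_i)$ and then through $_{u_j}P'$, with a small ``interchange $u_i$ and $u_j$'' repair in the one degenerate subcase. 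You instead prove the symmetric statement that $\pi_G$ and $\pi_H$ cannot both be non-injective, which forces you into the subcase where all four cross-adjacencies $u^\ast\sim u_3$, $u^\ast\sim u_4$, $v^\ast\sim v_1$, $v^\ast\sim v_2$ hold, and you resolve it with explicit four- and five-vertex induced paths through a product corner $(u_3,v_1)$, after using Lemma~\ref{lemma:either G or H layer} to guarantee the bookkeeping that $A,B,C,D$ are distinct. Both routes are sound; the paper's is shorter because the clique hypothesis hands it a free edge to route through, while yours is more symmetric in $G$ and $H$ and makes the reusable bad-path construction explicit, at the cost of the extra corner-path case analysis. I verified your corner paths: all non-consecutive pairs either differ in both coordinates (hence are non-adjacent in a Cartesian product) or lie in a common layer where non-adjacency follows from the case hypothesis, so no gap there.
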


\begin{proof}
  Suppose that $S$ satisfies neither of the first two statements; without loss of generality, we can assume that $u_1 = u_2$ and there is some $u_i \in \pi _G(S)$ with $u_i \not = u_1$. We will show that this implies that $\pi _G(S)$ induces a clique in $G$ and that the vertices $v_1,\dots ,v_{r}$ of $H$ are all distinct. 

  Let $P$ be a monophonic $u_1,u_i$-path in $G$ and $Q$ and $R$ be monophonic $v_1,v_2$- and $v_2,v_i$-paths in $H$. As the concatenated path $_{u_1}Q$, $P_{v_2}$, $_{u_i}R$ passes through $(u_1,v_1)$, $(u_2,v_2)$ and $(u_i,v_i)$, there must be a chord between the paths $_{u_1}Q$ and $_{u_i}R$, which is the case if and only if $u_1 \sim u_i$ in $G$ and the paths $Q$ and $R$ intersect in $H$. Therefore, $u_1\sim u_i$ in $G$ for every $i$ such that $u_1\ne u_i$, so that $\pi _G(S)$ induces a connected subgraph of $G$. It now follows from Lemmas~\ref{lemma:2.1} and~\ref{lem:projection is in mp} that $\pi _G(S)$ induces a clique in $G$. 

  Suppose now that not all vertices $v_1,\dots ,v_{r}$ are distinct, say $v_i = v_j$ (and hence $u_i \not = u_j)$. As we are assuming that $S$ does not lie in a single layer, there must be a $v_k \in \pi_H (S)$ with $v_k \not = v_i$. Let $P^{\prime }$ be a monophonic $v_i,v_k$-path in $H$. As $\pi _G(S)$ is a clique, the path formed by following the edge $(u_i,v_i) \sim (u_j,v_i)$, the path $_{u_j}P^{\prime }$ and then the edge $(u_j,v_k) \sim (u_k,v_k)$ (if $u_k = u_j$ we omit the final edge) will be monophonic unless $u_i = u_k$ and there is an edge $v_i \sim v_k$ in $H$; however, in this case we can interchange $u_i$ and $u_j$ in the argument to produce a contradiction.
\end{proof}

\begin{definition}
We will call a monophonic position set of Type (a) \emph{layered}, of Type (b) \emph{varied}, and of Type (c) \emph{cliquey}. These three types of monophonic position sets are shown schematically in Fig.~\ref{fig:3-types}. 
\end{definition}

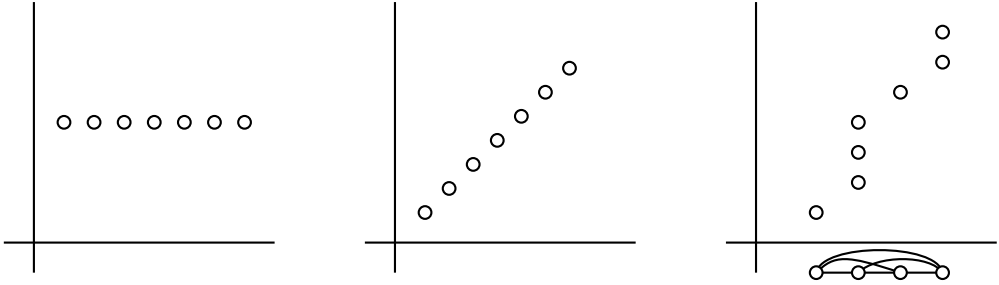
\begin{figure}[ht!]
\begin{center}
\begin{tikzpicture}[scale=0.8,style=thick,x=1cm,y=1cm]
\def\vr{3pt}

\begin{scope}[xshift=0cm, yshift=0cm] 
\coordinate(x1) at (1,2);
\coordinate(x2) at (1.5,2);
\coordinate(x3) at (2,2);
\coordinate(x4) at (2.5,2);
\coordinate(x5) at (3,2);
\coordinate(x6) at (3.5,2);
\coordinate(x7) at (4,2);
\draw (0,0) -- (4.5,0);
\draw (0.5,-0.5) -- (0.5,4);
\foreach \i in {1,2,3,4,5,6,7} 
{
\draw(x\i)[fill=white] circle(\vr);
}
\end{scope}

\begin{scope}[xshift=6cm, yshift=0cm] 
\coordinate(x1) at (1,0.5);
\coordinate(x2) at (1.4,0.9);
\coordinate(x3) at (1.8,1.3);
\coordinate(x4) at (2.2,1.7);
\coordinate(x5) at (2.6,2.1);
\coordinate(x6) at (3.0,2.5);
\coordinate(x7) at (3.4,2.9);
\draw (0,0) -- (4.5,0);
\draw (0.5,-0.5) -- (0.5,4);
\foreach \i in {1,2,3,4,5,6,7} 
{
\draw(x\i)[fill=white] circle(\vr);
}
\end{scope}

\begin{scope}[xshift=12cm, yshift=0cm] 
\coordinate(x1) at (1.5,0.5);
\coordinate(x2) at (2.2,1);
\coordinate(x3) at (2.2,1.5);
\coordinate(x4) at (2.2,2);
\coordinate(x5) at (2.9,2.5);
\coordinate(x6) at (3.6,3.0);
\coordinate(x7) at (3.6,3.5);
\coordinate(y1) at (1.5,-0.5);
\coordinate(y2) at (2.2,-0.5);
\coordinate(y3) at (2.9,-0.5);
\coordinate(y4) at (3.6,-0.5);

\draw (0,0) -- (4.5,0);
\draw (0.5,-0.5) -- (0.5,4);
\draw (y1) -- (y4);
\draw (y1) .. controls (1.8,-0.2) and (2.0,-0.2) .. (y3);
\draw (y2) .. controls (2.5,-0.2) and (3.4,-0.2) .. (y4);
\draw (y1) .. controls (1.6,0) and (3.5,0) .. (y4);
\foreach \i in {1,2,3,4,5,6,7} 
{
\draw(x\i)[fill=white] circle(\vr);
}
\foreach \i in {1,2,3,4} 
{
\draw(y\i)[fill=white] circle(\vr);
}
\end{scope}

\end{tikzpicture}
\caption{Layered (left), varied (middle), and cliquey (right) monophonic position sets}
\label{fig:3-types}
\end{center}
\end{figure}

\begin{corollary}\label{cor:upper bound}
If $G$ and $H$ are connected graphs, then 
$$\mono (G \cp H) \leq \max \{ \mono (G), \mono (H)\}\,.$$
\end{corollary}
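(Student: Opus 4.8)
The plan is to read off the bound directly from the structural trichotomy of Lemma~\ref{projection} together with the projection lemma (Lemma~\ref{lem:projection is in mp}). Let $S = \{(u_i,v_i) : i\in [r]\}$ be a largest monophonic position set of $G\cp H$, so that $r=\mono(G\cp H)$. If $r\le 1$ the claimed inequality is immediate, since $\mono(X)\ge 1$ for every nonempty graph $X$; so assume $r\ge 2$ and invoke Lemma~\ref{projection}, which puts $S$ into exactly one of the three types layered, varied, or cliquey.

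The key point is that in each of these three types at least one of the projections $\pi_G,\pi_H$ is injective on $S$. If $S$ is layered, say $S\subseteq V(G^v)$ for some $v\in V(H)$, then all vertices of $S$ share second coordinate $v$, so distinct vertices of $S$ have distinct first coordinates and $\pi_G|_S$ is injective (and symmetrically $\pi_H|_S$ is injective if $S$ lies in an $H$-layer). If $S$ is varied, then by definition $u_1,\dots,u_r$ are pairwise distinct and $v_1,\dots,v_r$ are pairwise distinct, so both $\pi_G|_S$ and $\pi_H|_S$ are injective. If $S$ is cliquey, say with $\pi_G(S)$ a clique of $G$ and $v_1,\dots,v_r$ pairwise distinct (the other subcase being symmetric), then $\pi_H|_S$ is injective.

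In every case there is therefore a factor $X\in\{G,H\}$ with $\pi_X|_S$ injective, whence $|\pi_X(S)|=r$. By Lemma~\ref{lem:projection is in mp} (applied directly when $X=H$, and via the commutativity remark preceding that lemma when $X=G$), the set $\pi_X(S)$ is a monophonic position set of $X$, so $r=|\pi_X(S)|\le \mono(X)\le \max\{\mono(G),\mono(H)\}$, which is precisely the assertion. There is essentially no obstacle here beyond quoting the two earlier lemmas; the only point worth flagging is that in the cliquey case the injectivity is obtained on the factor whose projection is \emph{not} the clique — exactly the content of Lemma~\ref{projection}(c) — since a clique projection need not be injective on $S$.
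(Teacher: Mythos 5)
Your proposal is correct and follows essentially the same route as the paper: the paper's proof likewise observes that Lemma~\ref{projection} forces $|\pi_G(S)|=|S|$ or $|\pi_H(S)|=|S|$ and then applies Lemma~\ref{lem:projection is in mp}. Your version merely spells out the case analysis behind that injectivity claim, which the paper leaves implicit.
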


\begin{proof}
By Lemma~\ref{projection}, $|\pi _G(S)| = |S|$ or $|\pi _H(S)| = |S|$ (or both), and so the conclusion follows from Lemma~\ref{lem:projection is in mp}.
\end{proof}

Corollary~\ref{cor:upper bound} implies that the difference $\gp (G \cp H)- \mono (G \cp H)$ can be arbitrarily large. The \emph{gear} $H_r$ is the graph of order $2r+1$ obtained by inserting an extra vertex between each pair of adjacent vertices of the outer cycle of a wheel graph $W_r$, cf.~\cite{peabody-2013}. It is shown in~\cite{Thomas-2024b} that for $r \geq 4$ the gear satisfies $\mono (H_r) = 2$ and $\gp (H_r) = r$. By Corollary~\ref{cor:upper bound} it follows that for $r \geq 4$ we have $\mono (H_r \cp H_r) = 2$, but $\gp (H_r \cp H_r) \geq \Delta (H_r \cp H_r) = 2r$.

As the bounds in Observation~\ref{obs:lower bound} and Corollary~\ref{cor:upper bound} coincide for products of paths and cycles, the monophonic position numbers of grid, cylinder and torus graphs follow immediately.

\begin{corollary}
For paths $P_m,P_n$ of order at least two and cycles $C_r,C_s$ of length at least four, 
\[ \mono (P_m \cp P_n) = \mono (P_n \cp C_r) = \mono (C_r \cp C_s) = 2\,.\]
\end{corollary}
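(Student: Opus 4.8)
The plan is to combine the general lower bound from Observation~\ref{obs:lower bound} with the upper bound from Corollary~\ref{cor:upper bound}, and to check that for paths of order at least two and cycles of length at least four these two bounds both equal $2$. Concretely, for any of the factors $X \in \{P_m, P_n, C_r, C_s\}$ with the stated size restrictions, I would observe that $X$ is not complete, so $\mono(X) < n(X)$; moreover $X$ is triangle-free, hence $\omega(X) = 2$. Thus the lower bound gives $\mono(P_m \cp P_n) \ge \max\{\omega(P_m),\omega(P_n)\} = 2$, and similarly for the other two products.

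For the matching upper bound I would invoke Corollary~\ref{cor:upper bound}, which reduces the problem to showing $\mono(X) = 2$ for each individual factor $X$. This is where the only real content lies. For a path $P_\ell$ with $\ell \ge 2$, the entire path is itself an induced path, so any three vertices of $P_\ell$ lie on a common monophonic path; hence no set of three or more vertices is in monophonic position and $\mono(P_\ell) = 2$. For a cycle $C_\ell$ with $\ell \ge 4$, I would argue that removing any one vertex leaves an induced path on the remaining $\ell - 1 \ge 3$ vertices, so any three vertices of $C_\ell$ that do not already exhaust a shorter induced subpath can be placed on such a path; a short case check confirms that any three vertices of $C_\ell$ lie on a common induced path (indeed the two arcs between consecutive chosen vertices are induced paths unless $\ell = 3$, which is excluded). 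Therefore $\mono(C_\ell) = 2$ for $\ell \ge 4$, and combining gives $\mono(P_n \cp C_r) \le \max\{\mono(P_n),\mono(C_r)\} = 2$ and $\mono(C_r \cp C_s) \le 2$.

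Putting the two directions together yields equality in all three cases. I expect no genuine obstacle here: the statement is essentially a corollary of the sandwich $\max\{\omega(G),\omega(H)\} \le \mono(G \cp H) \le \max\{\mono(G),\mono(H)\}$ together with the elementary fact that paths and long cycles have monophonic position number exactly $2$. The only point requiring a moment's care is the verification that three vertices of $C_\ell$ always lie on a common induced path when $\ell \ge 4$, which is precisely why the hypothesis on the cycle lengths is needed (for $C_3 = K_3$ one has $\mono(C_3) = 3$).
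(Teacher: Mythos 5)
Your proposal is correct and follows exactly the paper's (implicit) argument: sandwich $\mono(G\cp H)$ between the lower bound $\max\{\omega(G),\omega(H)\}$ of Observation~\ref{obs:lower bound} and the upper bound $\max\{\mono(G),\mono(H)\}$ of Corollary~\ref{cor:upper bound}, and note that both equal $2$ for paths of order at least two and cycles of length at least four. Your explicit verification that $\mono(P_\ell)=2$ and $\mono(C_\ell)=2$ for $\ell\ge 4$ (via deleting an unchosen vertex of the cycle to obtain an induced Hamiltonian path of the remainder) is a correct filling-in of facts the paper treats as known.
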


Our bounds also make it easy to evaluate $\mono (K_n \cp H)$ for graphs $H$ with small $\mono $-number.

\begin{corollary}\label{cor:cliques,paths,cycles}
If $H$ is a connected graph and $n \geq \mono (H)$, then $\mono (K_n \cp H) = n$. 
\end{corollary}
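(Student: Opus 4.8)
The plan is to sandwich $\mono(K_n \cp H)$ between matching lower and upper bounds, both equal to $n$, using the two results already established in this section. The only inputs needed are Observation~\ref{obs:lower bound}, Corollary~\ref{cor:upper bound}, the fact that $\mono(K_n) = n$ (since $K_n$ is complete), and the elementary inequality $\omega(H) \le \mono(H) \le n$.

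For the lower bound, I would apply Observation~\ref{obs:lower bound} to obtain $\mono(K_n \cp H) \ge \max\{\omega(K_n), \omega(H)\} = \max\{n, \omega(H)\}$. Since $\omega(H) \le \mono(H) \le n$ by hypothesis, this maximum is exactly $n$, giving $\mono(K_n \cp H) \ge n$.

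For the upper bound, I would invoke Corollary~\ref{cor:upper bound}, which yields $\mono(K_n \cp H) \le \max\{\mono(K_n), \mono(H)\} = \max\{n, \mono(H)\}$. Again the hypothesis $n \ge \mono(H)$ forces this maximum to be $n$, so $\mono(K_n \cp H) \le n$. Combining the two bounds gives the claimed equality.

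There is no real obstacle here: the corollary is a direct consequence of the structural machinery developed earlier, and the proof is a two-line deduction. If anything, the only point worth a sentence is to remark explicitly that $\omega(H) \le \mono(H)$ (so that the lower bound's $\max$ collapses to $n$), and that this is the same inequality chain already used in the proof of Observation~\ref{obs:lower bound}.
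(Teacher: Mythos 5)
Your proof is correct and is exactly the argument the paper intends: the corollary is stated without proof as an immediate consequence of sandwiching $\mono(K_n \cp H)$ between the lower bound of Observation~\ref{obs:lower bound} and the upper bound of Corollary~\ref{cor:upper bound}, both of which collapse to $n$ under the hypothesis $n \ge \mono(H) \ge \omega(H)$.
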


Corollary~\ref{cor:cliques,paths,cycles} yields $\mono (K_n \cp P_m) = n$ for $n \geq 2,m \geq 4$; $\mono (K_n \cp C_m) = n$ for $n \geq 3$; and $\mono (K_n \cp K_m) = \max\{ n,m\} $.

\subsection{Varied sets}

In this subsection we show that varied monophonic position sets can only contain at most two vertices. This also incidentally allows us to determined the smallest maximal monophonic position sets in any Cartesian product.

\begin{lemma}\label{lemma:interval} 
If $u, u^{\prime} \in V(G)$ and $v, v^{\prime } \in V(H)$ are such that $u^{\prime} \not \in N_G[u]$ and $v^{\prime} \not \in N_H[v]$, then the set $\{(u,v),(u^{\prime},v^{\prime})\}$ is a maximal monophonic position set of $G\cp H$.
\end{lemma}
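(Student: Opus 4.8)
The plan is to show two things: first that $M = \{(u,v),(u',v')\}$ is in monophonic position (which is immediate, since any two-vertex set is trivially in monophonic position), and second that $M$ is maximal, i.e. that adding any third vertex $(w,z)$ of $G \cp H$ to $M$ creates an $M$-bad path. So the real content is maximality, and the approach is to take an arbitrary $(w,z) \in V(G \cp H) \setminus M$ and exhibit a monophonic $(u,v),(u',v')$-path in $G \cp H$ that passes through $(w,z)$.

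The key construction will be to build such a path by concatenating a ``horizontal'' piece in a $G$-layer with a ``vertical'' piece in an $H$-layer, using the conventions $P_v$ and $_uQ$ from the preliminaries. Concretely, I would first route from $(u,v)$ to $(w,v)$ along a monophonic $u,w$-path in $G$ inside the layer $G^v$, then from $(w,v)$ to $(w,z)$ along a monophonic $v,z$-path in $H$ inside $^wH$, then from $(w,z)$ to $(w,v')$ continuing in $^wH$ along a monophonic $z,v'$-path, and finally from $(w,v')$ to $(u',v')$ along a monophonic $w,u'$-path in $G^{v'}$. The point of the hypotheses $u' \notin N_G[u]$ and $v' \notin N_H[v]$ is precisely to guarantee there are no unwanted chords: since $d_G(u,u') \geq 2$ there is no edge between $(u,v)$-coordinate-$u$ vertices and $(u',\cdot)$-coordinate-$u'$ vertices in the $G$-direction of the first and last pieces when they share an $H$-coordinate, and similarly $d_H(v,v') \geq 2$ rules out chords between the two $H$-layer pieces. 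One has to check the joins carefully: a chord could only occur between the first piece (in $G^v$) and the last piece (in $G^{v'}$) — but those lie in different $G$-layers with $v \not\sim v'$, so no edge joins them — or between the two middle pieces which together form a single monophonic path in $^wH$ provided the concatenation of the $v,z$- and $z,v'$-paths can be taken monophonic; if they cannot be combined into one induced path I would instead take a single monophonic $v,v'$-path through $z$ when $z \in J_H[v,v']$, and otherwise a monophonic $v,z$-path followed by a monophonic $z,v'$-path, splitting into cases.

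Here a cleaner route avoids the case analysis: use Lemma~\ref{projection}. If some three-element superset $M' = M \cup \{(w,z)\}$ were in monophonic position, then by Lemma~\ref{projection} it is layered, varied, or cliquey. It cannot be layered, because $(u,v)$ and $(u',v')$ share no $G$-layer (as $v \neq v'$) and no $H$-layer (as $u \neq u'$). If it is varied or cliquey, then in particular $\pi_G(M')$ and $\pi_H(M')$ both have size $3$ — wait, for cliquey only one projection is injective — so I would argue: in the varied case both projections are injective and neither induces a clique, but $\pi_G(M') \supseteq \{u,u'\}$ with $u' \notin N[u]$, fine, so that does not immediately contradict; hence Lemma~\ref{projection} alone is not enough and the direct path construction above is the honest way. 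So the plan is the direct construction, organised by whether $w \in \{u,u'\}$ or $w \notin N_G[u] \cup N_G[u'] \cup \ldots$, and symmetrically for $z$; in each case picking monophonic paths in the appropriate layers and verifying the absence of chords between the at most four straight sections, exactly in the style of the proof of Lemma~\ref{lem:projection is in mp} and its Table~\ref{Tab:main lemma}.

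The main obstacle will be the chord-checking at the corners where a $G$-layer piece meets an $H$-layer piece, and in particular handling the degenerate cases where $w$ coincides with $u$ or $u'$, or $z$ coincides with $v$ or $v'$ (so that one of the four sections collapses to a single vertex), without accidentally creating a short path that fails to be induced. I expect that in every case the hypotheses $u' \notin N_G[u]$, $v' \notin N_H[v]$ supply exactly the non-adjacency needed to close the argument, but the bookkeeping of which pairs of sections could possibly have a chord — a chord between two sections in different layers is impossible unless the layers are adjacent, which is controlled by the two hypotheses — is the delicate part and will need to be laid out systematically.
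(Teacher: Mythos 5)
There is a genuine gap in your central construction. Your proposed path keeps the entire vertical traversal from $v$ to $v'$ inside the single layer $^wH$ (a monophonic $v,z$-path followed by a monophonic $z,v'$-path, both in $^wH$). But if the enlarged set $M'=M\cup\{(w,z)\}$ is in monophonic position, then by Lemma~\ref{lem:projection is in mp} the set $\{v,v',z\}$ is in monophonic position in $H$, which means \emph{no} induced $v,v'$-path of $H$ passes through $z$; equivalently, the concatenation of your two middle sections can never be made chord-free inside one $H$-layer. Your fallback (``take a single monophonic $v,v'$-path through $z$ when $z\in J_H[v,v']$, and otherwise concatenate'') does not repair this: the case $z\notin J_H[v,v']$ is exactly the case you must handle, and there the concatenation in a single layer has chords you cannot remove. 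The fix, which is what the paper does, is a staircase of shape horizontal--vertical--horizontal--vertical: go from $(u,v)$ to $(x,v)$ in $G^v$, up to $(x,y)$ in $^xH$, across to $(u',y)$ in $G^y$, and up to $(u',v')$ in $^{u'}H$. The horizontal shift in the middle places the two vertical pieces in \emph{different} $H$-layers, so a chord between them requires $x\sim u'$ (and similarly $y\sim v$ for the horizontal pieces), and Lemma~\ref{projection} (which rules out types (a) and (c) here, so all three coordinates are distinct in each factor) disposes of your worries about degenerate coincidences.

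A second missing ingredient: even the staircase does not finish the proof in one stroke. It only yields a contradiction when $x\not\sim u'$ and $y\not\sim v$; otherwise one must deduce (using symmetry and Lemma~\ref{lem:projection is in mp} to exclude $x$ being adjacent to both $u$ and $u'$, or $y$ to both $v$ and $v'$) that without loss of generality $x\sim u$ and $y\sim v$, and then build a different bad path, namely $(x,y)\sim(x,v)\sim(u,v)$ followed by $_u Q''$ and $P''_{v'}$ for monophonic $v,v'$- and $u,u'$-paths $Q''$, $P''$ avoiding $y$ and $x$ respectively. Your proposal flags ``the bookkeeping of which pairs of sections could possibly have a chord'' as the delicate part but does not identify either the correct path shape or this residual adjacency case, which together constitute the actual content of the argument.
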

\begin{proof} 
Suppose that $(u,v)$ and $(u^{\prime},v^{\prime})$ satisfy the stated conditions, but that there exists a third vertex $(x,y)$ of $G \cp H$ such that $\{(u,v),(u^{\prime},v^{\prime}),(x,y)\}$ is a monophonic position set. It follows from Lemmas~\ref{lem:projection is in mp} and~\ref{projection} that $\{ u,u^{\prime },x\} $ is a set of distinct vertices of $G$ in monophonic position, and likewise $\{ v,v^{\prime },y\} $ in a monophonic position set of $H$ of order three.

Let $P$ and $P^{\prime }$ be monophonic $u,x$- and $x,u^{\prime }$-paths in $G$ respectively and $Q$ and $Q^{\prime }$ be monophonic $v,y$- and $y,v^{\prime }$-paths in $H$ respectively. Consider the path formed by the concatenation of $P_v$, $_{x}Q$, $P^{\prime }_y$ and $_{u^{\prime }}Q^{\prime }$. Trivially there are no chords between consecutive sections of the concatenated path, e.g. between $P_v$ and $_{x}Q$. There is no chord between $P_v$ and $_{u^{\prime }}Q^{\prime}$; otherwise there is an edge $(w,v) \sim (u^{\prime },z)$ in $G \cp H$, where $w$ lies on $P$ and $z$ lies on $Q^{\prime}$ and, since either $w = u^{\prime}$ or $z = v$ we would have a violation of Lemma~\ref{lem:projection is in mp}. 

Finally, if we assume that $x \not \sim u^{\prime }$ and $v \not \sim y$, then there are no chords between $P_v$ and $P^{\prime}_y$ or between $_{x}Q$ or $_{u^{\prime }}Q^{\prime}$. This contradicts our assumption that $\{(u,v),(u^{\prime},v^{\prime}),(x,y)\}$ is a monophonic position set, so we conclude that either $x \sim u^{\prime }$ or $y \sim v$. By symmetry, we also have $x \sim u$ or $y \sim v^{\prime }$. By Lemma~\ref{lem:projection is in mp} we cannot have both $x \sim u$ and $x \sim u^{\prime }$, or both $y \sim v$ and $y \sim v^{\prime }$, so without loss of generality $x \sim u$ and $y \sim v$. Let $P^{\prime \prime }$ be a monophonic $u,u^{\prime }$-path in $G$ and $Q^{\prime \prime }$ be a monophonic $v,v^{\prime }$-path in $H$. Then as $x$ does not lie on $P^{\prime \prime }$ and $y$ does not lie on $Q^{\prime \prime }$, the path formed by the edges $(x,y) \sim (x,v)$ and $(x,v) \sim (u,v)$ followed by the path $_{u}Q^{\prime \prime}$ and $P^{\prime \prime} _{v^{\prime}}$ would be a monophonic $(x,y),(u^{\prime },v^{\prime })$-path passing through $(u,v)$, a contradiction. We conclude that $\{(u,v),(u^{\prime},v^{\prime})\} $ is a maximal monophonic position set. 
\end{proof}

\begin{lemma}\label{lem:lowermp}
    If $uu^{\prime }$ is an edge of $G$ and $vv^{\prime }$ an edge of $H$, then $\{ (u,v),(u^{\prime },v^{\prime }) \} $ is a maximal monophonic position set of $G \cp H$.
\end{lemma}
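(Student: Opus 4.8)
The plan is to suppose for a contradiction that a vertex $(x,y)$ of $G\cp H$ can be added so that $M=\{(u,v),(u',v'),(x,y)\}$ is a monophonic position set, and then to exhibit an $M$-bad path; since any two vertices are in monophonic position, this is all that is needed. By Lemma~\ref{lem:projection is in mp} the projections $\pi_G(M)$ and $\pi_H(M)$ are monophonic position sets of $G$ and $H$, and by Lemma~\ref{projection} the set $M$ is layered, varied, or cliquey. It is not layered: $u\sim u'$ gives $u\ne u'$, so $M$ is not contained in a single $H$-layer, and $v\sim v'$ gives $v\ne v'$, so $M$ is not contained in a single $G$-layer.

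For the varied case I would reduce to Lemma~\ref{lemma:interval}. Here $\pi_G(M)=\{u,u',x\}$ is a monophonic position set of $G$ that does not induce a clique, so by Lemma~\ref{lemma:2.1} it induces a disjoint union of cliques; since $u\sim u'$, the vertex $x$ lies in a part of that decomposition distinct from the one containing $u$ and $u'$, and therefore $x\notin N_G[u]$. The same argument gives $y\notin N_H[v]$, and then Lemma~\ref{lemma:interval} applied to the pair $(u,v),(x,y)$ shows that $\{(u,v),(x,y)\}$ is already a maximal monophonic position set, contradicting $M\supsetneq\{(u,v),(x,y)\}$ being in monophonic position.

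The cliquey case carries the actual work. By the symmetry of the Cartesian product I may assume that $\pi_G(M)$ is a clique of order at least $2$ (so it contains $u$ and $u'$, and in particular $x=u$, $x=u'$, or $x$ is adjacent to both) and that $v,v',y$ are pairwise distinct. Applying Lemma~\ref{lemma:2.1} to the monophonic position set $\pi_H(M)=\{v,v',y\}$ of $H$ and using $v\sim v'$, either $H[\{v,v',y\}]$ is a triangle or $y$ is adjacent to neither $v$ nor $v'$. In the triangle case, adjacency among the vertices involved is governed purely by coordinate agreement (both $G[\{u,u',x\}]$ and $H[\{v,v',y\}]$ being complete), so a short explicit induced path through the three vertices of $M$ — such as $(u,v),(u,y),(x,y),(x,v'),(u',v')$ when $x\notin\{u,u'\}$, or $(u,v),(u,y),(u',y),(u',v')$ when $x=u$ (the case $x=u'$ being symmetric) — settles it; for so short a path one need only verify that no two nonconsecutive vertices are adjacent.

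The subcase $y\notin N_H[v]\cup N_H[v']$ is the principal obstacle, and I would handle it by a dichotomy on the monophonic $y,v$-paths in $H$. If some such path $Q$ avoids $v'$, consider the path starting at $(x,y)$ that uses the edge to $(u,y)$ (degenerate if $x=u$, and available since $x$ lies in a clique with $u$), then follows ${}_uQ$ to $(u,v)$, then takes the edges $(u,v)(u',v)$ and $(u',v)(u',v')$; the hypothesis $v'\notin V(Q)$ eliminates every potential chord. If instead every monophonic $y,v$-path meets $v'$, then each such path must end $\ldots,v',v$, so deleting its last vertex produces a monophonic $y,v'$-path $Q_1$ with $v\notin V(Q_1)$ and with $v'$ the only neighbour of $v$ on $Q_1$; the path starting at $(x,y)$ that uses the edge to $(u',y)$ (degenerate if $x=u'$), follows ${}_{u'}Q_1$ to $(u',v')$, and then takes the edges $(u',v')(u',v)$ and $(u',v)(u,v)$ is monophonic, this time because $v\notin V(Q_1)$ and $v$ has no neighbour on $Q_1$ other than $v'$. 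In every case the monophonicity check reduces to confirming that an edge of $G\cp H$ between two vertices of the constructed path forces agreement in one coordinate and hence occurs only between consecutive vertices, which follows from $u\sim u'$, $v\sim v'$, and the stated avoidance properties of $Q$ or $Q_1$.
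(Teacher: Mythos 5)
Your proof is correct, but it is organised quite differently from the paper's. The paper argues directly: it splits on whether the added vertex $(x,y)$ shares a coordinate with $(u,v)$ or $(u',v')$, and in each case builds one explicit bad path from induced paths in the factors that avoid the third projected vertex (such paths exist precisely because the projections are in monophonic position). You instead invoke the layered/varied/cliquey trichotomy of Lemma~\ref{projection}, kill the varied case by reducing to Lemma~\ref{lemma:interval} (a clean observation the paper does not exploit here), and then do the real work only in the cliquey case with explicit short paths. Your approach buys economy in the varied case and makes the structural lemmas earn their keep; the paper's approach is more self-contained and handles the ``no shared coordinate'' situation uniformly without distinguishing varied from cliquey. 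One remark: in your final subcase the second branch of the dichotomy (``every monophonic $y,v$-path meets $v'$'') is vacuous, since a monophonic $y,v$-path through $v'$ would already violate the fact, guaranteed by Lemma~\ref{lem:projection is in mp}, that $\{v,v',y\}$ is in monophonic position in $H$; this is exactly the shortcut the paper takes when it asserts outright that some induced $y,v$-path avoids $v'$. Your handling of that branch is harmless but unnecessary.
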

\begin{proof}
    Let $uu^{\prime }$ and $vv^{\prime }$ be edges in $G$ and $H$ respectively. We claim that $M = \{ (u,v),(u^{\prime },v^{\prime })\} $ is a maximal monophonic position set. Trivially $(u,v^{\prime })$ and $(u^{\prime },v)$ cannot be added to $M$. Suppose that for some $y \notin \{ v,v^{\prime }\} $ a vertex $(u,y)$ can be added to $M$ to form a monophonic position set $M^{\prime }$. Then by Lemma~\ref{lem:projection is in mp} $\{ v,v^{\prime },y\} $ is a monophonic position set of $H$. Hence there is an induced $y,v$-path $Q$ that does not pass through $v^{\prime }$. The path $_uQ$ followed by $(u,v) \sim (u^{\prime },v) \sim (u^{\prime },v^{\prime })$ would then be an induced path through all three vertices of $M^{\prime } $. The case of adding a vertex from the $u^{\prime } $-, $v$- or $v^{\prime } $-layers is analogous. 

    Finally, consider a set $M^{\prime } = M \cup \{ (x,y) \} $, where $x \notin \{ u,u^{\prime } \} $ and $y \notin \{ v,v^{\prime } \} $. If $M^{\prime }$ is in monophonic position, then by Lemma~\ref{lem:projection is in mp} the sets $\{ u,u^{\prime },x\} $ and $\{ v,v^{\prime },y\} $ are in monophonic position in $G$ and $H$ respectively. Let $P$ be an induced $x,u^{\prime }$-path in $G$ that does not contain $u$ and $Q$ be an induced $y,v^{\prime }$-path in $H$ that does not contain $v$. Then the concatenation of $_xQ$ and $P_{v^{\prime }}$ followed by the path $(u^{\prime },v^{\prime }) \sim (u^{\prime },v) \sim (u,v)$ would be an induced path through all three vertices of $M^{\prime }$. 
\end{proof}

The {\em lower monophonic position number} $\mono ^-(G)$ of a graph $G$ is the cardinality of a smallest maximal monophonic position set of $G$~\cite{DiKlKrTu-2025}. Lemma~\ref{lem:lowermp} allows us to immediately deduce the lower monophonic position number of any Cartesian product.

\begin{corollary}
If $G,H$ are graphs with order at least two, then $\mono ^-(G \cp H) = 2$.
\end{corollary}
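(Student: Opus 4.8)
The plan is to establish the two inequalities $\mono^-(G \cp H) \ge 2$ and $\mono^-(G \cp H) \le 2$ separately; the second of these is essentially immediate from Lemma~\ref{lem:lowermp}.

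First I would dispatch the lower bound. Since $G$ and $H$ both have order at least two, $G \cp H$ has order at least four, and in particular more than one vertex. Because any two-element set of vertices is trivially in monophonic position, no singleton $\{(g,h)\}$ can be a maximal monophonic position set: it can always be enlarged by adding any second vertex. Hence every maximal monophonic position set of $G \cp H$ has cardinality at least $2$, so $\mono^-(G \cp H) \ge 2$.

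Next I would prove the upper bound. Here I use that $G$ and $H$ are connected graphs of order at least two, so each of them contains at least one edge, say $uu' \in E(G)$ and $vv' \in E(H)$. Lemma~\ref{lem:lowermp} then states directly that $\{(u,v),(u',v')\}$ is a maximal monophonic position set of $G \cp H$, and it has exactly two elements, so $\mono^-(G \cp H) \le 2$. Combining the two bounds gives the claimed equality.

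I do not expect a genuine obstacle: all of the substance is already contained in Lemma~\ref{lem:lowermp}, and the corollary simply packages it together with the trivial lower bound. The one point to be careful about is the hypothesis — the upper-bound argument needs each factor to contain an edge, which holds for connected graphs of order $\ge 2$ but not in general (for instance $\overline{K_2} \cp \overline{K_2} \cong \overline{K_4}$ has $\mono^- = 4$), so the statement is to be read, consistently with the rest of this section, for connected factors. If one preferred not to invoke Lemma~\ref{lem:lowermp} in the subcase where both factors happen to be non-complete, one could instead apply Lemma~\ref{lemma:interval} to a pair of non-adjacent vertices in each factor, but this refinement is not needed to close the argument.
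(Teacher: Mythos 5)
Your proof is correct and follows exactly the route the paper intends: Lemma~\ref{lem:lowermp} supplies a maximal monophonic position set of size two, and no singleton can be maximal, giving the equality. Your remark that the hypothesis should be read with the section's standing assumption of connected factors (so that each factor contains an edge) is an accurate and worthwhile clarification.
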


We now show that Lemmas~\ref{lemma:interval} and~\ref{lem:lowermp} yield an upper bound on the number of vertices in a varied monophonic position set of $G \cp H$.

\begin{corollary}\label{cor:varied sets are small}
If $S$ is a varied monophonic position set of $G \cp H$, then $|S| \leq 2$.
\end{corollary}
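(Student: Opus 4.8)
The plan is to derive the bound directly from the two maximality lemmas just proved. Suppose $S$ is a varied monophonic position set of $G \cp H$ with $|S| \geq 3$. By the definition of a varied set (Type (b) of Lemma~\ref{projection}), the vertices of $S$ project to distinct vertices in each factor, and neither projection induces a clique; in particular $\pi_G(S)$ contains two non-adjacent vertices and $\pi_H(S)$ contains two non-adjacent vertices. The goal is to locate inside $S$ a pair of vertices $\{(u,v),(u',v')\}$ that is \emph{already} a maximal monophonic position set by Lemma~\ref{lemma:interval} or Lemma~\ref{lem:lowermp}, which would contradict $|S| \geq 3$.

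The main step is a short case analysis on how the pairwise adjacencies in the two projections interact. Pick any three vertices $(u_1,v_1),(u_2,v_2),(u_3,v_3) \in S$. First I would dispose of the easy case: if for some pair of indices $i,j$ we have simultaneously $u_i \not\sim u_j$, $u_i \neq u_j$ in $G$ and $v_i \not\sim v_j$, $v_i \neq v_j$ in $H$, then $\{(u_i,v_i),(u_j,v_j)\}$ is maximal by Lemma~\ref{lemma:interval} (whose hypotheses are exactly $u_j \notin N_G[u_i]$ and $v_j \notin N_H[v_i]$), contradiction. So for every pair $i,j$ of indices of $S$, at least one of the two projections has $u_i \sim u_j$ or $v_i \sim v_j$. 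Next, since $\pi_G(S)$ is not a clique, there is a pair $i,j$ with $u_i \not\sim u_j$; for that pair the previous sentence forces $v_i \sim v_j$ in $H$. Symmetrically there is a pair $k,\ell$ with $v_k \not\sim v_\ell$ and hence $u_k \sim u_\ell$ in $G$. The remaining work is to combine these: I would argue that one can choose the pair $\{i,j\}$ with $u_i \not\sim u_j$ and $v_i \sim v_j$, and the pair $\{k,\ell\}$ with $v_k \not\sim v_\ell$ and $u_k \sim u_\ell$, so that these two pairs share a common index — using that $S$ has a third vertex so the pairs cannot be forced to be disjoint, or by a direct enumeration over the at most three vertices. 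Say the common index gives a vertex $w = (a,b)$, with a second vertex $(a',b)$ (so $a' \sim a$, from the $G$-adjacent pair) and a third vertex $(a,b')$ (so $b' \sim b$, from the $H$-adjacent pair). Then the edge $aa'$ of $G$ together with the edge $bb'$ of $H$ makes $\{(a,b),(a',b')\}$ — or rather the relevant pair among our three vertices — a maximal monophonic position set by Lemma~\ref{lem:lowermp}; but $S$ properly contains two vertices and a third, contradicting maximality.

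The step I expect to be the main obstacle is the bookkeeping that guarantees the ``$G$-adjacent pair'' and the ``$H$-adjacent pair'' can be chosen to overlap in an index, so that Lemma~\ref{lem:lowermp} applies to an actual pair of vertices of $S$ with one factor-edge coming from each. With only three vertices $(u_1,v_1),(u_2,v_2),(u_3,v_3)$ this is a finite check: by the non-clique conditions and the Lemma~\ref{lemma:interval} dichotomy, among the three projected pairs in $G$ at least one is an edge and at least one is a non-edge, and likewise in $H$, and a non-edge in $G$ must be matched by an edge in $H$ on the same pair and vice versa; chasing which of the three pairs is which quickly yields a vertex incident (within $S$) to both a $G$-direction edge and an $H$-direction edge. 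If $|S| > 3$ one simply restricts to a suitable $3$-subset. One final small point to verify is that the two edges we invoke in Lemma~\ref{lem:lowermp} go to \emph{distinct} third vertices, i.e. that $(a',b) \neq (a,b')$, which is immediate since $a' \neq a$; and that these really are vertices of $S$, which holds by construction. With that, $|S| \leq 2$ follows.
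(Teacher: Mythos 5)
Your opening moves are sound and match the paper's strategy: Lemma~\ref{lemma:interval} rules out any pair of $S$ that is non-adjacent in both factors, Lemma~\ref{lem:lowermp} rules out any pair that is adjacent in both factors, so every pair of $S$ is adjacent in exactly one factor, and since neither projection is a clique both ``colours'' of pair occur. The gap is in your endgame. First, the configuration you describe --- a vertex $(a,b)$ together with a second vertex $(a',b)$ and a third vertex $(a,b')$ --- cannot occur in a varied set at all, since by definition (Type (b) of Lemma~\ref{projection}) all first coordinates and all second coordinates of $S$ are distinct; the partner in your ``$G$-adjacent pair'' has the form $(a',c)$ with $c\neq b$, not $(a',b)$. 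Second, and more seriously, the contradiction you aim for cannot be reached by Lemma~\ref{lem:lowermp}: that lemma applies to a pair $\{(a,b),(a',b')\}$ only when $aa'\in E(G)$ \emph{and} $bb'\in E(H)$, and your own dichotomy has already shown that no pair of vertices of $S$ satisfies both conditions. The pair $\{(a,b),(a',b')\}$ you invoke is not a pair of vertices of $S$ (only $(a,b)$ is), so its maximality says nothing about $S$; the hedge ``or rather the relevant pair among our three vertices'' does not identify an actual pair to which the lemma applies.

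The correct way to close the argument --- and what the paper does --- is to turn the overlap you found into an induced $P_3$ in one of the projections, contradicting Lemma~\ref{lem:projection is in mp}. Concretely: having a vertex $(u_m,v_m)$ with a ``$G$-coloured'' pair to $(u_p,v_p)$ (so $u_m\sim u_p$ and $v_m\not\sim v_p$) and an ``$H$-coloured'' pair to $(u_q,v_q)$ (so $v_m\sim v_q$ and $u_m\not\sim u_q$), examine the remaining pair $\{p,q\}$: if it is $G$-coloured then $u_q,u_p,u_m$... rather $u_m,u_p,u_q$ is an induced path in $G$ (since $u_m\sim u_p$, $u_p\sim u_q$, $u_m\not\sim u_q$), and if it is $H$-coloured then $v_p,v_m$ is replaced by the induced path $v_p,v_q,v_m$ in $H$; either way three vertices of a projection lie on a common induced path, contradicting the fact that projections of monophonic position sets are monophonic position sets. (The paper reaches the same contradiction slightly more directly by first using Lemma~\ref{lemma:2.1} to choose $u_1\sim u_2$, $u_1\not\sim u_3$, $u_2\not\sim u_3$ and then forcing $v_1,v_3,v_2$ to be an induced path.) With your final step replaced by this observation, your argument goes through.
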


\begin{proof}
Assume that $S$ is a varied monophonic position set of $G \cp H$ and suppose for a contradiction that $|S| \geq 3 $. Suppose that $\pi _G(S)$ is an independent set; then by Lemma~\ref{lemma:interval} the set $\pi _H(S)$ would induce a clique, a contradiction. Thus, since $\pi _G(S)$ induces neither a clique nor an independent set in $G$, we can choose three vertices $(u_1,v_1)$, $(u_2,v_2)$ and $(u_3,v_3)$ in $S$ such that $\{ u_1,u_2,u_3\} $ induces neither a triangle nor an independent set. Hence by Lemma~\ref{lem:projection is in mp} we can assume that $u_1 \sim u_2$, $u_1 \not \sim u_3$ and $u_2 \not \sim u_3$. Hence by Lemmas~\ref{lemma:interval} and~\ref{lem:lowermp} we must have $v_1 \sim v_3$, $v_2 \sim v_3$ and $v_1 \not \sim v_2$; however, this implies that $v_1,v_3,v_2$ is an induced path, contradicting Lemma~\ref{lem:projection is in mp}.  

\end{proof}

\subsection{Cliquey and layered sets}

We now restrict our attention to layered and cliquey monophonic position sets. Without loss of generality, we will assume that layered sets lie in a $H$-layer $\{ u\} \times V(H)$ and that if $S$ is cliquey, then $\pi _G(S)$ is a clique of order at least two. We will use the following labelling convention for layered and cliquey monophonic position sets $S$ of $G \cp H$: with each vertex $u_i \in \pi _G(S)$ we associate the set $S_i^{\prime } = \pi _H(S \cap (\{ u_i\} \times V(H))$ (hence if $S$ is layered there is just one such set). Recall that by Lemma~\ref{projection} the sets $S_i^{\prime }$ are pairwise disjoint, so that these sets partition $\pi _H(S)$. The sets $S_i^{\prime}$ are schematically represented in Fig.~\ref{fig:sets-S_i'}.

\begin{figure}[ht!]
\begin{center}
\begin{tikzpicture}[scale=1.3,style=thick,x=1cm,y=1cm]
\def\vr{2pt}

\coordinate(x1) at (1.5,0.5);
\coordinate(x2) at (2.2,1);
\coordinate(x3) at (2.2,1.5);
\coordinate(x4) at (2.2,2);
\coordinate(x5) at (2.9,2.5);
\coordinate(x6) at (3.6,3.0);
\coordinate(x7) at (3.6,3.5);
\coordinate(y1) at (1.5,-0.5);
\coordinate(y2) at (2.2,-0.5);
\coordinate(y3) at (2.9,-0.5);
\coordinate(y4) at (3.6,-0.5);
\coordinate(z1) at (0,0.5);
\coordinate(z2) at (0,1);
\coordinate(z3) at (0,1.5);
\coordinate(z4) at (0,2);
\coordinate(z5) at (0,2.5);
\coordinate(z6) at (0,3.0);
\coordinate(z7) at (0,3.5);

\draw (-1.3,0.1) -- (6.3,0.1);
\draw (0.5,-1.2) -- (0.6,5.3);
\draw (y1) -- (y4);
\draw (y1) .. controls (1.8,-0.2) and (2.0,-0.2) .. (y3);
\draw (y2) .. controls (2.5,-0.2) and (3.4,-0.2) .. (y4);
\draw (y1) .. controls (1.6,0) and (3.5,0) .. (y4);
\draw[rounded corners](-1.3,0.3)--(-1.3,5)--(0.3,5)--(0.3,0.3)--cycle;
\draw[rounded corners](1.0,-1.2)--(1.0,-0.1)--(6,0.-0.1)--(6,-1.2)--cycle;
\draw[rounded corners](1.0,0.3)--(1.0,5)--(6,5)--(6,0.3)--cycle;
\draw[rounded corners](-0.2,0.35)--(-0.2,0.75)--(0.2,0.75)--(0.2,0.35)--cycle;
\draw[rounded corners](-0.2,0.85)--(-0.2,2.15)--(0.2,2.15)--(0.2,0.85)--cycle;
\draw[rounded corners](-0.2,2.35)--(-0.2,2.65)--(0.2,2.65)--(0.2,2.35)--cycle;
\draw[rounded corners](-0.2,2.85)--(-0.2,3.65)--(0.2,3.65)--(0.2,2.85)--cycle;
\foreach \i in {1,2,3,4,5,6,7} 
{
\draw(x\i)[fill=white] circle(\vr);
\draw(z\i)[fill=white] circle(\vr);
}
\foreach \i in {1,2,3,4} 
{
\draw(y\i)[fill=white] circle(\vr);
}
\node at (1.5,-1) {$u_1$};
\node at (2.2,-1) {$u_2$};
\node at (2.9,-1) {$u_3$};
\node at (3.6,-1) {$u_4$};
\node at (5.5,-0.5) {$G$};
\node at (-1,4.5) {$H$};
\node at (5,4.5) {$G\cp H$};
\node at (-0.5,0.5) {$S_1'$};
\node at (-0.5,1.5) {$S_2'$};
\node at (-0.5,2.5) {$S_3'$};
\node at (-0.5,3.25) {$S_4'$};
\end{tikzpicture}
\caption{Sets $S_i^{\prime}$}
\label{fig:sets-S_i'}
\end{center}
\end{figure}

Since we can find monophonic position sets in $G \cp H$ of order $\max \{ \omega (G),\omega (H)\} $, we also tacitly assume that the monophonic sets under discussion have order strictly greater than the lower bound in Observation~\ref{obs:lower bound}.  

\begin{lemma}
\label{lem:S' is independent} Let $S$ be either a cliquey or a layered monophonic position set of $G\cp H$ with $|S| > \max \{ \omega (G),\omega (H)\} $. Then:
\begin{enumerate}
    \item[$($a$)$] If $\{ u\} \times S^{\prime } \subseteq S$, where $S^{\prime } \subseteq V(H)$, then $S^{\prime }$ induces an independent set in $H$. 
    \item[$($b$)$] If $S^{\prime \prime } \times \{ v\} \subseteq S$, where $S^{\prime\prime } \subseteq V(G)$, then $S^{\prime\prime }$ induces an independent set in $G$.
   \end{enumerate}
\end{lemma}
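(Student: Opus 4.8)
The plan is to prove (a) by contradiction; part (b) will come almost for free. By the conventions just fixed, $\pi_H$ is injective on $S$ — if $S$ is layered because it lies in a single $H$-layer, and if $S$ is cliquey by Lemma~\ref{projection}(c) — so every $G$-layer $G^v$ meets $S$ in at most one vertex; hence any $S^{\prime\prime }$ with $S^{\prime\prime }\times\{v\}\subseteq S$ has at most one element, and (b) is immediate. (Had the complementary normalisation been chosen in the cliquey case, the roles of (a) and (b) would simply swap.) For (a), I would assume that $\{u_1\}\times S^{\prime }\subseteq S$ while $S^{\prime }$ contains an edge $vv^{\prime }$ of $H$, so that $(u_1,v),(u_1,v^{\prime })\in S$, and aim for a contradiction.

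First I would observe, via Lemma~\ref{lem:projection is in mp}, that $\pi_H(S)$ is a monophonic position set of $H$; since $\pi_H$ is injective on $S$ we get $|\pi_H(S)|=|S|>\omega(H)$, so by Lemma~\ref{lemma:2.1} the graph $H[\pi_H(S)]$ is a disjoint union of at least two cliques (a single clique would exceed $\omega(H)$). As $v\sim v^{\prime }$ they lie in a common clique, and I would pick a vertex $v_3$ in a different one. Then $v_3\in\pi_H(S)$ is distinct from $v$ and $v^{\prime }$ and adjacent to neither in $H$, and $(u_3,v_3)\in S$ for some $u_3\in\pi_G(S)$. The key point is that $v_3,v^{\prime },v$ are three distinct vertices of the monophonic position set $\pi_H(S)$, so no monophonic path of $H$ runs through all three; consequently \emph{every} monophonic $v_3,v$-path $Q$ of $H$ avoids $v^{\prime }$ (and such a $Q$ exists since $H$ is connected).

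Using $Q$, I would exhibit an $S$-bad path of $G\cp H$ through $(u_3,v_3)$, $(u_1,v)$ and $(u_1,v^{\prime })$, distinguishing whether $(u_3,v_3)$ shares the $G$-layer of $(u_1,v)$. If $u_3\ne u_1$ (possible only when $S$ is cliquey), then $u_3\sim u_1$ since $\pi_G(S)$ is a clique, and I take $_{u_3}Q$ from $(u_3,v_3)$ to $(u_3,v)$ followed by the edges $(u_3,v)\sim(u_1,v)\sim(u_1,v^{\prime })$. If $u_3=u_1$ — always so when $S$ is layered — I first hop into an adjacent $G$-layer: choosing $u_2\sim u_1$ with $u_2\ne u_1$ (a neighbour of $u_1$ in $G$ when $S$ is layered, using that $G$ is nontrivial; a second vertex of the clique $\pi_G(S)$ when $S$ is cliquey), I take $(u_1,v_3)\sim(u_2,v_3)$, then $_{u_2}Q$ from $(u_2,v_3)$ to $(u_2,v)$, then $(u_2,v)\sim(u_1,v)\sim(u_1,v^{\prime })$. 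It then remains to check that the path is induced, which is routine: a monophonic path inside a single layer has no chords, since a layer is an induced copy of $H$; the endpoint $(u_3,v_3)$, respectively $(u_1,v_3)$, has no further neighbour on the path because $v_3$ occurs only once on $Q$ and is non-adjacent to both $v$ and $v^{\prime }$; and $(u_1,v^{\prime })$ receives no chord from $_{u_2}Q$ or $_{u_3}Q$ because $v^{\prime }\notin V(Q)$. The path carries three vertices of $S$, a contradiction; hence $S^{\prime }$ is independent, which proves (a).

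The one delicate step is this final construction, in particular the case $u_3=u_1$, which forces a detour through an adjacent $G$-layer; this is precisely where part (a) needs $G$ to have order at least two (and, symmetrically, (b) needs $H$ nontrivial), without which $K_1\cp H\cong H$ would supply counterexamples. Everything else rests on the single clean observation that $\pi_H(S)$ being in monophonic position already prevents any monophonic $v_3,v$-path of $H$ from meeting $v^{\prime }$.
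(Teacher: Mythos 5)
Your proof is correct and follows essentially the same strategy as the paper's: both rely on Lemma~\ref{lem:projection is in mp} and Lemma~\ref{lemma:2.1} together with $|S|>\omega(H)$ to produce a vertex of $\pi_H(S)$ non-adjacent to the offending edge of $S^{\prime}$, and both lift an induced path of $H$ through an adjacent $G$-layer to manufacture an $S$-bad path, disposing of part (b) by the symmetry of the normalisation. The only difference is organisational — the paper first shows $S^{\prime}$ is a clique or an independent set and then excludes the clique case, whereas you contradict any single edge of $S^{\prime}$ directly using a vertex from a second component of $H[\pi_H(S)]$ — and your explicit remark that the factors must be nontrivial matches the paper's implicit standing assumption.
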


\begin{proof} By the commutativity of $G\cp
H$, it suffices to prove the first part.
Suppose that $\{ u\} \times S^{\prime } \subseteq S$. We show firstly that $S^{\prime }$ is either a clique or an independent set. If $|S^{\prime }| \leq 2$, then there is nothing to prove, so we assume that $|S^{\prime }| \geq 3$. By Lemmas~\ref{lemma:2.1} and~\ref{lem:projection is in mp}, $S^{\prime }$ is an independent union of cliques, so if $S^{\prime }$ is not a clique or an independent set, then there are vertices $v_1,v_2,v_3 \in S^{\prime }$ such that $v_1 \sim v_2$, $v_1 \not \sim v_3$ and $v_2 \not \sim v_3$ in $H$. Let $u^{\prime }$ be any neighbour of $u$ in $G$ and $Q$ be a monophonic $v_2,v_3$-path in $H$. Since $\pi_H(S)$ is a monophonic position set of $H$, it is clear that the vertex $v_1$ does not belong to the path $Q$. Then the path formed by concatenating the path $(u,v_1),(u,v_2),(u^{\prime },v_2)$ with the path $_{u^{\prime }}Q$ and then the edge $(u^{\prime },v_3) \sim (u,v_3)$ in that order would be $\{ u\} \times S^{\prime }$-bad. Thus $S^{\prime }$ is either a clique or an independent set.

    If $S$ is layered, and $S^{\prime }$ is a clique, we would immediately have $|S| \leq \omega (H)$, so we assume that $S$ is cliquey. Suppose that there are $v_1,v_1^{\prime } \in S^{\prime }$ with $v_1 \sim v_1^{\prime }$ in $H$ and let $u_2 \in \pi _G(S) \setminus \{ u\} $, where $(u_2,v_2) \in S$. Let $Q$ be a monophonic $v_1^{\prime },v_2$-path. Then the path $(u,v_1),(u,v_1^{\prime }),(u_2,v_1^{\prime })$ followed by $_{u_2}Q$ is a bad path, so there is no such edge in $S^{\prime }$. 
\end{proof}

\begin{lemma}\label{cliquey}
    Let $S$ be a cliquey monophonic position set of $G \cp H$ with $|S| > \max \{ \omega (G),\omega (H)\} $, where $\pi _G(S)$ is a clique. Then $\pi _H(S)$ is an independent set.
\end{lemma}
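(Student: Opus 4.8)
The plan is to proceed by contradiction: suppose $\pi_H(S)$ contains an edge $v_1 \sim v_2$, where $(u_1, v_1), (u_2, v_2) \in S$. By Lemma~\ref{cliquey}'s hypothesis, $\pi_G(S)$ is a clique of order at least $2$, and by Lemma~\ref{projection}(c) the vertices $v_1, \dots, v_r$ are all distinct, so $u_1 \ne u_2$ forces us (via Lemma~\ref{lemma:2.1} applied to $\pi_H(S)$) into a situation where $v_1, v_2$ each need a common neighbour off $\pi_H(S)$. But the real leverage comes from combining the clique structure on the $G$-side with the edge $v_1 \sim v_2$ on the $H$-side to build a bad path. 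Since $|S| > \max\{\omega(G), \omega(H)\}$ there is a third vertex $(u_3, v_3) \in S$ with $v_3 \notin \{v_1, v_2\}$.

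The key construction is as follows. Since $\pi_G(S)$ is a clique, $u_1 \sim u_3$ and $u_2 \sim u_3$ (with the understanding that if some of $u_1, u_2, u_3$ coincide we simply drop the corresponding edge). Let $Q$ be a monophonic $v_2, v_3$-path in $H$; since $\pi_H(S)$ is a monophonic position set of $H$ (Lemma~\ref{lem:projection is in mp}), the vertex $v_1$ does not lie on $Q$. Now consider the path in $G \cp H$ that starts at $(u_1, v_1)$, takes the edge to $(u_1, v_2)$ if $v_1 \sim v_2$ — wait, more carefully: start at $(u_1, v_1)$, traverse the edge $(u_1, v_1) \sim (u_2, v_1)$ (valid since $u_1 \sim u_2$ in the clique, or skip if $u_1 = u_2$), then the edge $(u_2, v_1) \sim (u_2, v_2)$ (valid since $v_1 \sim v_2$), then the path $_{u_2}Q$ from $(u_2, v_2)$ to $(u_2, v_3)$, then the edge $(u_2, v_3) \sim (u_3, v_3)$ (valid in the clique, or skip if $u_2 = u_3$). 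This path passes through $(u_1, v_1)$, $(u_2, v_2)$, and $(u_3, v_3)$, all in $S$. I must verify it is monophonic: the $G$-coordinate visits only clique vertices $\{u_1, u_2, u_3\} \subseteq \pi_G(S)$, and chords within a layer are blocked by the monophonicity of $Q$, while cross-layer chords would require both a $G$-edge (automatic in a clique) and coincidence of $H$-coordinates — here the only repeated $H$-coordinate among the "transition" vertices is $v_1$ appearing once and $v_3$ appearing at the junction, so a chord of the form $(u_1, v_1) \sim (u_2, v_3)$ would need $v_1 = v_3$, excluded, and $(u_1, v_1) \sim (w, v_1)$ for $w$ on $Q$'s image would need $v_1$ on $Q$, excluded. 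A short case analysis handles the endpoints $v_3$ and the possible coincidences among the $u_i$. This yields an $S$-bad path, the desired contradiction.

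The main obstacle I anticipate is the bookkeeping when the $u_i$ are not all distinct — for instance if $u_1 = u_2$, the two edges collapse and the path begins $(u_1, v_1) \sim (u_1, v_2)$, which is still fine since $v_1 \sim v_2$; if $u_2 = u_3$ the terminal edge is dropped and the path ends at $(u_2, v_3) = (u_3, v_3)$. The genuinely delicate case is ruling out chords between the initial segment and the tail $_{u_2}Q$ when $u_1$ happens to be adjacent to (or equal to) $u_2$ — but since all intermediate vertices of $Q$ have $H$-coordinate outside $\{v_1\}$ (and outside $\pi_H(S)$ by monophonicity of $\pi_H(S)$ in $H$, one can even argue they avoid all of $\pi_H(S)$ if needed), and since $v_1 \ne v_3$, no such chord can exist. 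One should also double-check, as in the proof of Lemma~\ref{lem:S' is independent}, that the vertex $v_1$ genuinely fails to lie on any monophonic $v_2, v_3$-path — this is immediate because $v_1, v_2, v_3$ would otherwise be three vertices of $\pi_H(S)$ on a common monophonic path, violating Lemma~\ref{lem:projection is in mp}. Assembling these observations gives the contradiction and completes the proof.
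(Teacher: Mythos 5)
There is a genuine gap: the path you construct is never monophonic. The segment $(u_2,v_1),(u_2,v_2),{}_{u_2}Q$ lies entirely in the single layer $^{u_2}H$, so it projects to the walk $v_1,v_2,Q$ in $H$. Since $\pi_H(S)$ is a monophonic position set of $H$ (Lemma~\ref{lem:projection is in mp}) and $v_1,v_2,v_3\in\pi_H(S)$, \emph{no} induced $v_1,v_3$-path of $H$ can pass through $v_2$; as $Q$ itself is chordless, the concatenation $v_1,v_2,Q$ must therefore have a chord $v_1\sim w$ for some vertex $w$ of $Q$ with $w\ne v_2$ (possibly $w=v_3$). This lifts to the chord $(u_2,v_1)\sim(u_2,w)$ of your path. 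Your chord check misses this because you only rule out \emph{cross-layer} chords (which indeed require equality of $H$-coordinates, hence ``$v_1$ not on $Q$'' suffices) and chords internal to $_{u_2}Q$; but $(u_2,v_1)$ sits in the \emph{same} layer as $_{u_2}Q$, where a chord requires only adjacency of $H$-coordinates, and such an adjacency is forced to exist. So the contradiction you derive is illusory: the offending path is bad for a reason unrelated to $S$ being in monophonic position.

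The fix is to route the long $H$-path and the edge $v_1v_2$ through \emph{different} layers, so that any chord between them would need coincidence (not adjacency) of $H$-coordinates. This is exactly what the paper's two-line proof exploits: an edge of $H$ inside a single set $S_i'$ is excluded by Lemma~\ref{lem:S' is independent}(a), and an edge between $S_i'$ and $S_j'$ ($i\ne j$) is excluded by Lemma~\ref{lem:lowermp}, whose proof builds the bad path as $_{u}Q$ followed by $(u,v)\sim(u',v)\sim(u',v')$, keeping $Q$ in the layer $^{u}H$ while the $H$-edge is traversed in $^{u'}H$. A corrected direct construction along your lines would be, e.g., $_{u_3}Q$ from $(u_3,v_3)$ to $(u_3,v_2)$, then $(u_3,v_2)\sim(u_2,v_2)\sim(u_2,v_1)\sim(u_1,v_1)$, together with a case analysis for coincidences among $u_1,u_2,u_3$ (the case $u_1=u_2$ being dispatched directly by Lemma~\ref{lem:S' is independent}(a)).
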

\begin{proof}
Lemma~\ref{lem:S' is independent} shows that each set $S_i^{\prime }$ is an independent set in $H$. It further follows from Lemma~\ref{lem:lowermp} that there cannot be an edge in $H$ between sets $S_i^{\prime}$ and $S_j^{\prime}$ for $i \neq j$.
\end{proof}

\begin{lemma}
\label{lem:simplicial vertices}
If $S$ is a layered or cliquey monophonic position set of $G \cp H$ (where $\pi _G(S)$ is a clique) and $|S| > \max \{ \omega (G),\omega (H)\} $, then every vertex from $\pi _G(S)$ is simplicial.
\end{lemma}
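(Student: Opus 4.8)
The plan is to argue by contradiction: suppose some vertex $u \in \pi_G(S)$ is not simplicial, so there exist two neighbours $u', u''$ of $u$ in $G$ with $u' \not\sim u''$. The idea is to use these two non-adjacent neighbours of $u$, together with the structure we have already established for $S$, to build an $S$-bad induced path in $G \cp H$ and obtain the desired contradiction. First I would record the facts we may use for free: by Lemma~\ref{lem:S' is independent} the set $S_i^{\prime}$ associated with each $u_i \in \pi_G(S)$ is independent in $H$; in the cliquey case $\pi_H(S)$ is independent by Lemma~\ref{cliquey} and $\pi_G(S)$ is a clique of order $\geq 2$; and since $|S| > \max\{\omega(G),\omega(H)\}$ there must be at least two distinct sets $S_i^{\prime}$ that are nonempty (in the layered case $|S^{\prime}| = |S| > \omega(H) \geq 2$, so $S^{\prime}$ contains at least two vertices $v_1, v_2$, necessarily non-adjacent).

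The heart of the argument is the following construction. Fix $u \in \pi_G(S)$ non-simplicial with non-adjacent neighbours $u', u''$ of $u$ in $G$. Take two vertices $(u, v_1)$ and $(u, v_2)$ of $S$ lying in the layer $^uH$ (in the layered case both live in the single set $S^{\prime}$; in the cliquey case we need two elements of $S_i^{\prime}$ for the same $i$ — here I anticipate a subtlety, see below — with $v_1 \neq v_2$ and $v_1 \not\sim v_2$), and a third vertex $(u_j, v_3) \in S$ with $u_j \neq u$ (which exists since there are $\geq 2$ nonempty $S_i^{\prime}$ or, in the layered case, we instead pick $v_3 \in S^{\prime}$ a third vertex of $H$). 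I would then build the path that starts at $(u, v_1)$, goes to $(u', v_1)$, then follows $^{u'}Q$ where $Q$ is a monophonic $v_1, v_2$-path in $H$ not through... — and here one must be careful which endpoints to route between. A cleaner route: go $(u,v_1) \sim (u', v_1)$, then across $^{u'}H$ along a monophonic $v_1, v_3$-path, then from $(u', v_3)$ jump to $(u_j, v_3)$ via a monophonic $u', u_j$-path in $G$ inside $G^{v_3}$, meanwhile checking that $(u, v_2)$ can be inserted at the start by going $(u, v_2) \sim (u, v_1)$ — but $v_1 \sim v_2$ may fail. The correct construction uses $u''$ symmetrically: route one "arm" through $u'$ and the other arm (reaching $(u,v_2)$) through $u''$, and the non-adjacency $u' \not\sim u''$ is precisely what kills the potential chord between the two arms.

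Concretely: let $Q_1$ be a monophonic $v_2, v_1$-path in $H$ and $Q_2$ a monophonic $v_1, v_3$-path in $H$; since $\pi_H(S)$ is in monophonic position (Lemma~\ref{lem:projection is in mp}), $Q_1$ avoids $v_3$ and $Q_2$ avoids $v_2$. Consider the concatenation $(u, v_2) \sim (u'', v_2)$, then $_{u''}Q_1$ from $(u'', v_2)$ to $(u'', v_1)$, then $(u'', v_1) \sim (u, v_1) \sim (u', v_1)$, then $_{u'}Q_2$ from $(u', v_1)$ to $(u', v_3)$, then a monophonic $u', u_j$-path inside $G^{v_3}$ ending at $(u_j, v_3)$. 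One checks each section is monophonic, there are no chords between consecutive sections, and the only non-consecutive pairs that could create a chord are $_{u''}Q_1$ versus $_{u'}Q_2$ (blocked because $u' \not\sim u''$), the initial edges at $u''$ versus the $u'$-arm (again blocked by $u' \not\sim u''$ and by $v_1 \not\sim v_2$ when $v_1, v_2$ are in the same $S_i^{\prime}$, which is independent), and interactions with the $G^{v_3}$-tail (blocked since $v_3 \notin V(Q_1) \cup V(Q_2)$ and that tail stays in layer $v_3$). This path passes through the three $S$-vertices $(u, v_2)$, $(u, v_1)$, $(u_j, v_3)$, so it is $S$-bad — contradiction.

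The step I expect to be the main obstacle is organizing the case analysis and the chord-checking so that it is uniform across the layered and cliquey cases, and in particular making sure that in the cliquey case one genuinely has two distinct vertices $v_1 \neq v_2$ in a single set $S_i^{\prime}$: if every $S_i^{\prime}$ were a singleton then the layer-based argument above does not directly apply, and one would instead need to exploit that $\pi_G(S)$ is then a clique of size $|S| > \omega(G)$, an immediate contradiction — so that degenerate subcase actually resolves itself, but it must be spelled out. Verifying the absence of chords between the two "arms" routed through $u'$ and $u''$ is the one genuinely delicate point, and it hinges entirely on $u' \not\sim u''$ together with the independence of the relevant $S_i^{\prime}$ guaranteed by Lemma~\ref{lem:S' is independent}.
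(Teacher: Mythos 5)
Your overall strategy is the paper's: assume $u\in\pi_G(S)$ is not simplicial, take non-adjacent neighbours $u',u''$, and build an $S$-bad path whose two ``arms'' run through $u'$ and $u''$ so that the non-adjacency $u'\not\sim u''$ kills the cross-chord. Your main construction is essentially the paper's argument for the cases $|S_i'|\ge 3$ and $|S_i'|=2$. However, there is a genuine gap in the cliquey case, precisely at the point you flagged but then resolved incorrectly. Your construction needs \emph{two} vertices $(u,v_1),(u,v_2)$ of $S$ in the layer of the non-simplicial vertex $u$ itself. You dismiss the degenerate situation by noting that if \emph{every} $S_i'$ were a singleton then $|S|=|\pi_G(S)|\le\omega(G)$, a contradiction. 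But the problematic case is when $|S_i'|=1$ for the particular non-simplicial vertex $u=u_i$ while some \emph{other} $S_j'$ has at least two elements: then there is no contradiction with $|S|>\omega(G)$, yet your layer-based construction cannot be run at $u_i$, and re-centring it at $u_j$ does not work because $u_j$ need not have a pair of non-adjacent neighbours. The paper handles this with a separate third case: it first shows (via the $|S_j'|\ge 2$ argument) that $u_j$ \emph{is} simplicial, deduces that $u_j$ is therefore non-adjacent to at least one of $u',u''$, say $u'$, and then builds a different bad path through $(u_i,v_1)$, $(u_j,v_2)$, $(u_j,v_3)$ using a monophonic $u',u_j$-path. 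This extra idea is missing from your proposal.

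A secondary, smaller issue: in your cliquey construction the tail is a monophonic $u',u_j$-path $P$ lifted into $G^{v_3}$, and your chord check for the tail only rules out chords of the form ``$y\sim u''$ and $v_3=x$''. It does not rule out a chord $(u'',v_3)\sim(u'',x)$ arising when $u''$ lies on $P$ and $v_3$ happens to be adjacent in $H$ to an internal vertex $x$ of $Q_1$; nothing in your setup forbids either of these. The paper sidesteps this by taking $P$ to be a \emph{shortest path from the set} $\{u',u''\}$ to $u_j$, which by minimality cannot pass through the unused one of the two neighbours. Both points are repairable, but as written the proof is incomplete.
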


\begin{proof}
Suppose that $u_i\in \pi _G(S)$ is not simplicial, with $w_1,w_2 \in N_G(u_i)$ and $w_1 \not \sim w_2$ in $G$. Firstly assume that $|S_i^{\prime }| \geq 3$, with $\{ v_1,v_2,v_3\} \subseteq S_i^{\prime }$. Let $Q$ and $Q^{\prime }$ be monophonic $v_1,v_2$- and $v_2,v_3$-paths respectively. By Lemmas~\ref{lem:projection is in mp} and~\ref{lem:S' is independent}, the path formed by concatenating the edge $(u_i,v_1) \sim (w_1,v_1)$, the path $_{w_1}Q$, the path $(w_1,v_2)$, $(u_i,v_2)$, $(w_2,v_2)$, the path $_{w_2}Q^{\prime }$ and finally the edge $(w_2,v_3) \sim (u_i,v_3)$ in that order would be bad.

Now suppose that $|S_i^{\prime }| = 2$. If $S$ is layered, then we could not have $|S| > \max \{ \omega (G),\omega (H)\} $, so $S$ must be cliquey and there is a $u_j \in \pi _G(S) \setminus \{ u_i\} $. Let $v_1,v_2 \in S_i^{\prime }$, $v_3 \in S_j^{\prime }$, $Q$ and $Q^{\prime }$ be monophonic $v_1,v_2$- and $v_2,v_3$-paths respectively and $P$ be a shortest path from $\{ w_1,w_2\} $ to $u_j$, which we assume without loss of generality to be a $w_2,u_j$-path (notice that $Q$ has length at most two and that $w_2 = u_j$ is possible). Then the following concatenated path is bad: the edge $(u_i,v_1) \sim (w_1,v_1)$, the path $_{w_1}Q$, the path $(w_1,v_2)$, $(u_i,v_2)$, $(w_2,v_2)$, the path $_{w_2}Q^{\prime}$, and finally the path $P_{v_3}$. Thus $|S_i^{\prime }| = 1$.

Finally suppose that $|S_i^{\prime }| = 1$. As $|S| > \max \{ \omega (G),\omega (H)\} $, $S$ is cliquey and there is a $u_j \in \pi _G(S) \setminus \{ u_i\} $ with $|S_j^{\prime }| \geq 2$, say $S_i^{\prime } = \{ v_1\} $ and $\{ v_2,v_3\} \subseteq S_j^{\prime }$. Since $|S_j^{\prime }| \geq 2$, the arguments in the previous case show that $u_j$ is simplicial. Consequently, $u_j$ is not adjacent to at least one of $w_1$ and $w_2$, say $w_1$. Let $Q$ and $Q^{\prime }$ be monophonic $v_1,v_2$- and $v_2,v_3$-paths respectively and $P$ be a monophonic $w_1,u_j$-path. The path formed as the concatenation of the edge $(u_i,v_1) \sim (w_1,v_1)$ and the paths $_{w_1}Q$, $P_{v_2}$, and $_{u_j}Q^{\prime }$ in that order would be bad.  
\end{proof}

A vertex subset $S$ of a graph that is simultaneously an independent set and in monophonic position is called an \emph{independent monophonic position set}. The largest order of an independent monophonic position set is the \emph{independent monophonic position number} of $G$, denoted by $\mono_i(G)$. With this notation in hand, we prove the following tight bounds. Recall that for any graph $\sigma (G) = 1$ if $G$ contains a simplicial vertex and $\sigma (G) = 0$ otherwise.
    
\begin{theorem} 
\label{thm:general bounds}
If $G$ and $H$ are connected graphs, then $$\max\{\omega(G), \omega(H)\}\leq \mono (G \cp H) \leq \max \{ \omega (G), \omega (H), \sigma (G) \mono_i(H), \sigma (H) \mono_i(G)\}.$$ 
    Furthermore, if neither $G$ nor $H$ has simplicial vertices, then $$\mono (G \cp H) = \max \{ \omega (G), \omega (H)\}\,.$$    
\end{theorem}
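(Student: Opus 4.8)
The plan is to prove the two inequalities in the displayed bound separately, and then derive the equality for simplicial-free factors as an immediate corollary of the upper bound together with Observation~\ref{obs:lower bound}. The lower bound $\max\{\omega(G),\omega(H)\}\leq \mono(G\cp H)$ is exactly Observation~\ref{obs:lower bound}, so nothing new is needed there. For the upper bound, I would take a maximum monophonic position set $S$ of $G\cp H$ and invoke Lemma~\ref{projection}, which tells us $S$ is layered, varied, or cliquey. If $S$ is varied, then $|S|\leq 2$ by Corollary~\ref{cor:varied sets are small}, and $2\leq\max\{\omega(G),\omega(H)\}$ since the factors are connected with at least two vertices, so this case is subsumed by the bound. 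The substantive work is in the layered and cliquey cases.

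For the layered and cliquey cases I would first dispose of the situation $|S|\leq \max\{\omega(G),\omega(H)\}$, which is again trivially within the claimed bound. So assume $|S|>\max\{\omega(G),\omega(H)\}$ and, without loss of generality (using commutativity of $\cp$), that $S$ lies in an $H$-layer (layered case) or that $\pi_G(S)$ is a clique of order at least two (cliquey case). In the layered case, $S=\{u\}\times S'$ for some $S'\subseteq V(H)$; by Lemma~\ref{lem:S' is independent}(a), $S'$ is an independent set in $H$, and by Lemma~\ref{lem:projection is in mp} it is a monophonic position set of $H$, hence $S'$ is an independent monophonic position set and $|S|=|S'|\leq \mono_i(H)$. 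Moreover, by Lemma~\ref{lem:simplicial vertices} the vertex $u\in\pi_G(S)$ is simplicial, so $\sigma(G)=1$ and therefore $|S|\leq \sigma(G)\mono_i(H)$, which lies within the right-hand side of the bound. In the cliquey case with $\pi_G(S)$ a clique, Lemma~\ref{cliquey} gives that $\pi_H(S)$ is an independent set in $H$; by Lemma~\ref{lem:projection is in mp} it is also a monophonic position set, so $\pi_H(S)$ is an independent monophonic position set, and since the sets $S_i'$ partition $\pi_H(S)$ and $|\pi_H(S)|=|S|$ (as the $v_i$ are distinct in the cliquey case), we get $|S|=|\pi_H(S)|\leq \mono_i(H)$. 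Again Lemma~\ref{lem:simplicial vertices} shows every vertex of $\pi_G(S)$ is simplicial, so $\sigma(G)=1$ and $|S|\leq\sigma(G)\mono_i(H)$. Taking the maximum over which factor plays the role of $G$ versus $H$ yields the stated upper bound.

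Finally, for the equality statement: if neither $G$ nor $H$ has a simplicial vertex, then $\sigma(G)=\sigma(H)=0$, so the terms $\sigma(G)\mono_i(H)$ and $\sigma(H)\mono_i(G)$ in the upper bound vanish, leaving $\mono(G\cp H)\leq\max\{\omega(G),\omega(H)\}$; combined with Observation~\ref{obs:lower bound} this forces equality.

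I expect the main obstacle to be bookkeeping rather than any deep new idea: all the structural heavy lifting has already been done in Lemmas~\ref{projection}, \ref{lem:S' is independent}, \ref{cliquey}, and \ref{lem:simplicial vertices} and in Corollary~\ref{cor:varied sets are small}. The one point requiring a little care is making sure, in the cliquey case, that $|\pi_H(S)|=|S|$ so that the bound $|S|\leq\mono_i(H)$ genuinely follows — this is where Lemma~\ref{projection}(c) (distinctness of the $v_i$) is essential — and symmetrically that the layered/cliquey dichotomy is handled up to the commutativity of the product so that both $\mono_i(G)$ and $\mono_i(H)$ terms appear. A secondary subtlety is confirming that the case $|S|\leq\max\{\omega(G),\omega(H)\}$ and the varied case are both harmlessly absorbed into the right-hand side, which follows since $\max\{\omega(G),\omega(H)\}$ is itself one of the terms in the maximum.
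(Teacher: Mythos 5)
Your proposal is correct and follows essentially the same route as the paper: reduce to the layered/cliquey cases via Corollary~\ref{cor:varied sets are small}, then combine Lemma~\ref{lem:simplicial vertices} (forcing $\sigma=1$ for the relevant factor) with Lemmas~\ref{lem:projection is in mp}, \ref{lem:S' is independent} and~\ref{cliquey} (giving an independent monophonic position set in the other factor), and finally specialise to $\sigma(G)=\sigma(H)=0$ for the equality. Your treatment is just a slightly more explicit write-up of the same argument, including the bookkeeping point that $|\pi_H(S)|=|S|$ in the cliquey case.
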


\begin{proof}
Suppose that $G\cp H$ contains a monophonic position set $S$ with $|S|> \max\{\omega(G), \omega(H)\}$. Then it follows from Corollary~\ref{cor:varied sets are small} that $S$ is either cliquey or layered. As before, we assume that $\pi _G(S)$ is a clique. Then by Lemma~\ref{lem:simplicial vertices} it is necessary that every vertex of $\pi _G(S)$ is simplicial, so that $\sigma (G) = 1$. Lemmas~\ref{lem:projection is in mp} and~\ref{cliquey} show that $\pi_H(S)$ is an independent monophonic position set of $H$ and hence $\mono(G\cp H)=|S|=|\pi_H(S)|\leq \mono_i(H)$. The case that $\pi _H(S)$ is a clique is similar and would yield an upper bound of $\sigma (H)\mono_i(G)$. 
\end{proof} 

We conclude this subsection by a lower bound for graphs which contain leaves. To do so, for a vertex of any graph $G$ we will write $\ell (u)$ for the number of neighbours of $u$ that are leaves, and set $\Delta _1(G) = \max \{ \ell (u):u \in V(G)\} $. 

\begin{proposition}\label{prop:lowerbound}
    If $G$ and $H$ are graphs with order at least three that both contain leaves, then \[ \mono (G \cp H) \geq \max \{ \Delta _1(G),\Delta _1(H)\} .\]
\end{proposition}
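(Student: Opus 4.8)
The plan is to produce an explicit monophonic position set of $G\cp H$ of the required size. Since the Cartesian product is commutative and $\mono$ is a graph invariant, I would first assume without loss of generality that $\Delta_1(G)\ge \Delta_1(H)$ and put $k=\Delta_1(G)$; note $k\ge 1$ because $G$ has a leaf. Fix a vertex $u\in V(G)$ with exactly $k$ leaf-neighbours $w_1,\dots,w_k$ in $G$, and fix a leaf $v$ of $H$ (which exists by hypothesis) with unique $H$-neighbour $v'$. The candidate set is $M=\{(w_1,v),\dots,(w_k,v)\}$, which has exactly $k$ elements; everything then reduces to showing that $M$ is in monophonic position in $G\cp H$.

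The structural fact I would exploit is that each $w_i$ is a leaf of $G$ and $v$ is a leaf of $H$, so in $G\cp H$ the vertex $(w_i,v)$ has degree exactly two, with $N_{G\cp H}((w_i,v))=\{(u,v),(w_i,v')\}$. Hence all $k$ vertices of $M$ have the single common neighbour $(u,v)$, while their respective second neighbours $(w_i,v')$ are pairwise distinct. (One can also note that $\{w_1,\dots,w_k\}$ is independent in $G$, since two leaves sharing a neighbour are non-adjacent, though this will not be needed.)

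To check that $M$ is in monophonic position I would argue by contradiction, using that a monophonic path is exactly an induced path. Suppose an induced path $P$ of $G\cp H$ contains three distinct vertices of $M$, say $(w_a,v)$, $(w_b,v)$, $(w_c,v)$, occurring in this order along $P$. Then $(w_b,v)$ is an internal vertex of $P$, so its two $P$-neighbours are precisely its two neighbours in $G\cp H$, namely $(u,v)$ and $(w_b,v')$; in particular $(u,v)$ is a vertex of $P$. Now $(w_a,v)$, $(w_b,v)$, $(w_c,v)$ are three distinct vertices of $P$, each adjacent in $G\cp H$ to the vertex $(u,v)$ of $P$; since $P$ is induced, each of them is forced to be consecutive to $(u,v)$ along $P$. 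This is impossible, since a vertex of a path has at most two neighbours on that path. The contradiction shows that no induced path of $G\cp H$ runs through three vertices of $M$, so $M$ is a monophonic position set and $\mono(G\cp H)\ge |M| = k = \max\{\Delta_1(G),\Delta_1(H)\}$.

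I do not expect a genuine obstacle beyond spotting the right set $M$: once the degree-two property of the vertices of $M$ and their common neighbour $(u,v)$ are isolated, the verification collapses to the one-line counting argument about the neighbours of $(u,v)$ on an induced path above, with no case analysis on the wider structure of $G$ or $H$. (The construction uses only that each factor contains a leaf.)
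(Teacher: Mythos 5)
Your proposal is correct and is essentially the paper's own argument: both construct the set as (leaf of one factor) $\times$ (set of leaf-neighbours of a fixed vertex in the other factor), observe that each chosen vertex has degree two in $G\cp H$ with the common neighbour $(u,v)$, and derive the contradiction from an induced path being forced through $(u,v)$, which is adjacent to all three selected vertices. The only difference is the immaterial swap of the roles of $G$ and $H$ (handled by commutativity of the Cartesian product) and that you spell out the final counting step slightly more explicitly.
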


\begin{proof}
    Let $u$ be a leaf of $G$ and $L$ a set of leaves adjacent to some vertex $v$ of $H$. We show that the set $\{ u\} \times L$ is in monophonic position in $G\cp H$. If $|L| \leq 2$, then this is trivial. Otherwise, suppose that $P$ is a monophonic $(u,x_1),(u,x_3)$-path in $G \cp H$ passing through $(u,x_2)$ for some $x_1,x_2,x_3 \in L$. The vertex $(u,x_2)$ has degree two in $G \cp H$ and so one of its neighbours is $(u,v)$, which is also adjacent to $(u,x_1)$ and $(u,x_2)$, a contradiction. 
\end{proof}

As an example, consider $K_{1,n}\cp K_{1,m}$, $n\ge m\ge 2$, $n\ge 4$. Let $V(K_{1,k})=\{0,1,\ldots, k\}$, where $0$ is the central vertex. Then the set $T= [n]\times \{1\}$ is a maximum monophonic position set of $K_{1,n}\cp K_{1,m}$ and hence $\mono(K_{1,n}\cp K_{1,m}) = n = \mono_i(K_{1,n})$. This demonstrates that the upper bound in Theorem~\ref{thm:general bounds} and the lower bound in Proposition~\ref{prop:lowerbound} are sharp. In addition, it follows from~\cite[Theorem 2.1]{TianXuChao-2023} that $\gp(K_{1,n}\cp K_{1,m}) = m+n$, hence this is another example that shows that the $\mono $-number and the $\gp $-number on product graphs can be arbitrarily far apart.

\subsection{Triangle-free graphs}

In this subsection we employ the structural properties established in the earlier lemmas about the monophonic position sets in the Cartesian product of graphs to give a tighter bound on the monophonic position number of Cartesian products of triangle-free graphs. Observe that any simplicial vertex of a triangle-free graph must be a leaf.

\begin{theorem}
\label{thm:bipartite}
If $G$ and $H$ are connected triangle-free graphs of order at least $3$, then 
$$\mono(G\cp H)\leq\max\{2, \sigma(G)\Delta(H), \sigma(H)\Delta(G)\}\,.$$ 
\end{theorem}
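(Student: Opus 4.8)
The plan is to force the extremal set into a single structural shape and then bound it by showing it lies inside one neighbourhood of $H$. Since $G$ and $H$ are connected, triangle-free, and of order at least $3$, we have $\omega(G)=\omega(H)=2$. Let $S$ be a largest monophonic position set of $G\cp H$; if $|S|\le 2$ we are done, so assume $|S|\ge 3$. Then $|S|>\max\{\omega(G),\omega(H)\}$, so by Corollary~\ref{cor:varied sets are small} $S$ is not varied, and by Lemma~\ref{projection} it is layered or cliquey. I would first rule out the cliquey case: a clique of order at least two in a triangle-free graph is a single edge $u_1u_2$, both ends of which are simplicial by Lemma~\ref{lem:simplicial vertices}; but a simplicial vertex of a triangle-free graph has degree at most one, so $u_1,u_2$ would be adjacent leaves, which is impossible in a connected graph of order at least $3$. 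Thus $S$ is layered, and by commutativity of $\cp$ I may assume $S=\{u\}\times S'$ lies in an $H$-layer, where $S'=\pi_H(S)$. Then $u$ is simplicial (Lemma~\ref{lem:simplicial vertices}), hence a leaf of $G$, so $\sigma(G)=1$; the set $S'$ is independent in $H$ (Lemma~\ref{lem:S' is independent}); and $S'$ is a monophonic position set of $H$ (Lemma~\ref{lem:projection is in mp}). It therefore suffices to prove $|S'|\le\Delta(H)$, which together with the symmetric $G$-layer case (and the fact that $\Delta\ge 2$ for a connected graph of order at least $3$) gives the stated bound.

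The key tool I would set up is a \emph{bad path criterion}. Fix a neighbour $u'$ of $u$ and a vertex $w\in N_G(u')\setminus\{u\}$; such a $w$ exists because $G$ is connected of order at least $3$, and $w\not\sim u$ because $G$ is triangle-free. The claim is: if $a,b,c\in S'$ and $H$ contains a monophonic $a,b$-path $Q$ no vertex of which is adjacent to $c$, then $S$ cannot be in monophonic position. Indeed, picking any monophonic $b,c$-path $R$ in $H$, the concatenation of $_uQ$, the edges $(u,b)\sim(u',b)\sim(w,b)$, the path $_wR$, and the edges $(w,c)\sim(u',c)\sim(u,c)$ is a monophonic $(u,a),(u,c)$-path through $(u,b)$: within each layer all adjacencies between path vertices are consecutive ones, because $Q$ and $R$ are monophonic, no vertex of $Q$ is adjacent to $c$ (so in particular $c\notin Q$), and $S'$ is independent; and there are no edges at all between the $u$-layer and the $w$-layer because $u\not\sim_G w$. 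This contradicts $S$ being in monophonic position.

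Using this, I would prove that $S'$ lies in a single neighbourhood $N_H(z)$, so that $|S|=|S'|\le\deg_H(z)\le\Delta(H)$. First, every two vertices $v_1,v_2\in S'$ have a common neighbour: if not, $d_H(v_1,v_2)\ge 3$, and for $v_3\in S'\setminus\{v_1,v_2\}$ one of two things happens. Either, for some ordering of $\{v_1,v_2,v_3\}$, there is a monophonic path between two of them no vertex of which is adjacent to the third, in which case the criterion gives a bad path through three vertices of $S$. Or this fails for all orderings; then, taking a shortest $v_1,v_2$-path $Q$, triangle-freeness of $H$ together with $S'$ being in monophonic position in $H$ forces $v_3$ to be adjacent to exactly one internal vertex $q_k$ of $Q$ (adjacency to two internal vertices at $Q$-distance at least $2$ yields a monophonic $v_1,v_2$-path through $v_3$; adjacency to two consecutive ones yields a triangle); a parallel analysis of a shortest $v_1,v_3$-path together with a short length comparison then forces $q_k$ to be the $Q$-neighbour of $v_2$, whence $v_2,q_k,v_3$ is a monophonic $v_2,v_3$-path no vertex of which is adjacent to $v_1$ (since $v_1=q_0\not\sim q_k$ when $d_H(v_1,v_2)\ge 3$) — again contradicting the failing-for-all-orderings assumption. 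Next, every triple of $S'$ has a common neighbour: if $a$ is a common neighbour of $v_1,v_2$ and $b$ of $v_2,v_3$ with $a\ne b$, then $v_1,a,v_2,b,v_3$ is a path whose only possible chords are $a\sim b$ (a triangle, impossible), $a\sim v_3$, or $v_1\sim b$, the last two of which each exhibit a common neighbour of the triple, and if no chord occurs this is a monophonic path through $v_1,v_2,v_3$ — impossible. Finally a Helly-type argument of the same flavour (take a maximal subset of $S'$ with a common neighbour; if it is proper, a suitable monophonic five-vertex path forces a contradiction) upgrades this to a common neighbour $z$ of all of $S'$; the $G$-layer case is symmetric.

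The hard part will be the step that every pair in $S'$ has a common neighbour: the criterion only bites once the third vertex can be routed around \emph{some} monophonic path, so one must rule out the degenerate configuration where the third vertex blocks every monophonic path between the other two. This is exactly where both factors being triangle-free is used — triangle-freeness of $G$ provides the non-adjacent pair $u,w$ that eliminates cross-layer chords, while triangle-freeness of $H$ (with monophonic position of $S'$) forces the rigid ``unique internal attachment'' structure along a shortest path that the length comparison then collapses. Everything else (the reduction, the path criterion, the triple and Helly steps) is routine once this configuration has been excluded.
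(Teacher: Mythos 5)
Your proposal is correct, and while its skeleton (reduce to a layered set via Corollary~\ref{cor:varied sets are small} and Lemma~\ref{lem:simplicial vertices}, observe that $u$ is a leaf and $S'=\pi_H(S)$ is an independent monophonic position set, then show that every pair of $S'$ is at distance two and finally that $S'$ lies in a single neighbourhood of $H$) coincides with the paper's, the two substantive steps are argued genuinely differently. For the distance-two step the paper fixes a pair at \emph{maximum} distance $k\ge 3$, chooses among all shortest paths between them one minimising the distance to a third vertex $v_3$, and runs a positional case analysis on where $v_3$ attaches; you instead take an arbitrary pair at distance at least $3$, use the ``unique internal attachment'' observation (by triangle-freeness and the monophonic position of $S'$, a third vertex can be adjacent to exactly one internal vertex of a shortest path) on both a shortest $v_1,v_2$-path and a shortest $v_1,v_3$-path, and collapse the configuration by a length comparison. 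This avoids the extremal choice of path and is, if anything, cleaner; the one detail to add is that in the ``two internal neighbours at $Q$-distance at least $2$'' subcase you must take the \emph{first and last} internal neighbours of $v_3$ on $Q$ so that the rerouted $v_1,v_2$-path is actually induced. For the final step the paper applies what is essentially your own bad-path criterion directly to the induced path $v_1,x,v_2$, where $x$ is a common neighbour of $v_1,v_2$: since $S'$ is independent, any $v_3\in S'$ not adjacent to $x$ sees no vertex of that path, so every vertex of $S'$ is adjacent to $x$ and $|S|=|S'|\le\deg_H(x)\le\Delta(H)$ in one stroke. Your detour through ``every triple has a common neighbour'' plus a Helly-type argument is valid but redundant; moreover the Helly step, which you leave vague, is in fact immediate from your criterion (if $T$ is maximal with common neighbour $z$ and $v\in S'\setminus T$, then for $t_1,t_2\in T$ the induced path $t_1,z,t_2$ has no vertex adjacent to $v$, and the criterion yields a bad path), so nothing essential is missing.
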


\begin{proof} 
Let $S$ be a largest monophonic position set of $G\cp H$. If $|S|\geq 3$, then $S$ is either cliquey or layered by Corollary \ref{cor:varied sets are small}. Suppose that $S$ is cliquey and $\pi_G(S)=\{u_1, u_2\}$ induces a clique, that is, $u_1\sim u_2$ in $G$. However, as the order of $G$ is at least three, the vertices $u_1$ and $u_2$ cannot both be leaves, contradicting Lemma~\ref{lem:simplicial vertices}. 

Hence $S$ must be layered. Again by Lemma~\ref{lem:simplicial vertices}, we may assume that $\pi_G(S)=\{u\}$, where $u$ is a leaf of $G$. Let $u^{\prime }$ be any vertex of $G$ with $d_G(u,u^{\prime }) = 2$ and $P$ be any shortest $u,u^{\prime }$ path. As in the proof of Theorem~\ref{thm:general bounds}, we conclude that $\pi_H(S)$ is an independent monophonic position set of $H$. 

We first show that all vertices of $\pi _H(S)$ are at distance two from each other. Let $k = \max \{ d_H(v,v^{\prime }):v,v^{\prime } \in \pi _H(S)$. Assume for a contradiction that $k \geq 3$ and let $v_1,v_2$ be a pair of vertices of $\pi _H(S)$ with $d_H(v_1,v_2) = k$. Let $v_3$ be any third vertex of $\pi _H(S)$ ($v_3$ exists, since $|\pi _H(S)| = |S| \geq 3$). Amongst all shortest $v_1,v_2$-paths in $H$, let $Q$ be a path that minimises $d_H(Q,v_3)$, where $d_H(Q,v_3) = \min \{ d_H(x,v_3):x \in V(Q)\} $. Let $Q$ be the path $v_1 = x_0,x_1,\dots x_{k-1},x_k = v_2$. 

Let $x_i$ be the vertex with the smallest value of $i$ on $Q$ such that $d_H(x_i,v_3) = d_H(Q,v_3)$. Let $Q^+$ be the monophonic $v_3,v_1$-path in $H$ formed from the concatenation of a shortest $v_3,x_i$-path $Q^*$ and the $x_i,v_1$-section of $Q$. If $i = k$, then $Q^+$ would be a monophonic path containing all three of $v_1,v_2,v_3$, so we have $0 \leq i \leq k-1$.   

Suppose that $0 \leq i \leq k-2$. Consider the concatenation of the four paths $_uQ^+$, $P_{v_1}$, $_{u^{\prime }}Q$ and $\widetilde{P}_{v_2}$ in that order. As $v_2$ does not lie on $Q^+$, $d_G(u,u^{\prime }) = 2$ and $v_1 \not \sim v_2$ in $H$, this concatenated path would be a monophonic path containing $(u,v_1)$, $(u,v_2)$ and $(u,v_3)$ unless $v_2$ has a neighbour $y$ on $Q^*$. By our assumption that $i \leq k-2$, $y \neq x_i$. In fact, $y$ must be the neighbour of $x_i$ on $Q^*$, for otherwise $Q^*$ would not be a shortest path from $v_3$ to $Q$. As $Q$ is a shortest $v_1,v_2$-path, it follows that $i = k-2$ and the path formed by following the $v_1,v_{k-2}$-section of $Q$ followed by the path $v_{k-2},y,v_2$ would be a shortest $v_1,v_2$-path closer to $v_3$ than $Q$, contradicting the definition of $Q$. Hence $i = k-1$.   

Now the vertex $v_2$ has no neighbours on $Q^*$ other than $x_{k-1}$, for otherwise $H$ would either contain a triangle or else $Q^*$ would not be a shortest path from $v_3$ to $Q$. Therefore the $v_3,v_2$-path $Q^{++}$ formed by appending the edge $x_{k-1}v_2$ to $Q^*$ is monophonic. By definition of $Q^*$ and the condition that $d_H(v_1,v_2) = k \geq 3$ there is no edge from $v_1$ to $Q^{++}$ and it follows that the concatenation of the four paths $_uQ^{++}$, $P_{v_2}$, $_{u^{\prime }}\widetilde{Q}$ and $\widetilde{P}_{v_1}$ is a monophonic path containing each of $(u,v_1)$, $(u,v_2)$ and $(u,v_3)$. Therefore our assumption that $k \geq 3$ is false and all pairs of vertices from $\pi _H(S)$ are at distance two.

Suppose now that $\mono (G \cp H) > \Delta (H)$. Fix two vertices $v_1,v_2 \in \pi _H(S)$ and let $Q$ be a shortest path $v_1,x,v_2$ between them. As $|\pi _H(S)| > \Delta (H)$, there must be a vertex $v_3$ of $\pi _H(S)$ that is not adjacent to $x$. Let $Q'$ be a shortest path $v_2,y,v_3$. As there is no edge from $v_3$ to $\{ v_1,x,v_2\} $, it follows that the concatenation of $_uQ$, $P_{v_2}$, $_{u^{\prime }}Q'$ and $\widetilde{P}_{v_3}$ in that order would be a monophonic path containing $(u,v_1)$, $(u,v_2)$ and $(u,v_3)$, a contradiction. Therefore $\mono (G \cp H) \leq \Delta (H)$. The claimed inequality follows when we allow the roles of $G$ and $H$ to be reversed.
\end{proof}

Again the example of the Cartesian product of two stars at the end of the previous subsection shows that the bound of Theorem~\ref{thm:bipartite} is tight.

\section{Lexicographic products}\label{sec:Lexicographic products}

The variety of general position sets was introduced in~\cite{Tian-2025} and has already been investigated on lexicographic products in~\cite{Dokyeesun-2024b}. In this section, we determine the monophonic position number of arbitrary lexicographic products. Before stating the main result of this section, we recall the distance function of lexicographic products (see~\cite[Proposition 5.12]{hik-2011}) and state two lemmas.  

\begin{proposition}
\label{prop:Products:LexDist1}
If $(g,h)$ and $(g^{\prime},h^{\prime})$ are distinct vertices of $G\circ H$, then
$$d_{G\circ H}\left((g,h),(g^{\prime},h^{\prime})\right)=
\left\{\begin{array}{ll}
d_G(g,g^{\prime}); & \mbox{$g\ne g^{\prime}$}\,,\\
d_H(h,h^{\prime}); & g=g^{\prime}, \deg_G(g)=0\,,\\
\min\{d_H(h,h^{\prime}), 2\}; & g=g^{\prime}, \deg_G(g)\ne 0\,.\\
\end{array}\right.$$
\end{proposition}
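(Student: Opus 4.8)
The plan is to argue by the three cases built into the statement, in each case exhibiting a short path together with a matching lower bound.

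First, consider the case $g \ne g'$. For the upper bound, I would take a $g,g'$-geodesic $g = g_0, g_1, \dots, g_k = g'$ in $G$, where $k = d_G(g,g')$; since $g_i \sim g_{i+1}$ in $G$, consecutive vertices of $(g_0,h),(g_1,h),\dots,(g_{k-1},h),(g_k,h')$ are adjacent in $G\circ H$, giving a walk of length $k$. For the matching lower bound I would project onto $G$: along any $(g,h),(g',h')$-path in $G\circ H$, each consecutive pair of vertices has first coordinates that are either equal or adjacent in $G$, so deleting repeated first coordinates turns the projection into a $g,g'$-walk in $G$ of length at most that of the path; hence the path has length at least $d_G(g,g')$. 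Thus $d_{G\circ H}((g,h),(g',h')) = d_G(g,g')$.

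Next, consider $g = g'$. If $\deg_G(g) = 0$, then no edge of $G\circ H$ leaves the layer $^gH$ (such an edge would require $g$ to have a $G$-neighbour), and $^gH \cong H$ by definition of the lexicographic product, so the distance equals $d_H(h,h')$. If $\deg_G(g) \ne 0$, I would fix a $G$-neighbour $g''$ of $g$; then for any $h'' \in V(H)$ the walk $(g,h),(g'',h''),(g,h')$ shows the distance is at most $2$, while $(g,h) \sim (g,h')$ in $G\circ H$ holds if and only if $h \sim h'$, i.e. if and only if $d_H(h,h') = 1$. Since $h \ne h'$ forces $d_H(h,h') \ge 1$, combining these yields $d_{G\circ H}((g,h),(g,h')) = \min\{d_H(h,h'), 2\}$ (with the convention $\min\{\infty,2\} = 2$, which also covers the case that $H$ is disconnected).

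The only step that is not completely routine is the lower bound in the first case, namely ruling out shortcuts through $H$-layers, and the projection-and-contraction argument sketched above handles it. Since the formula is classical, appearing as \cite[Proposition 5.12]{hik-2011}, the cleanest option is simply to cite it and either omit the proof entirely or retain only the brief projection remark.
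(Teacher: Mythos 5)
Your argument is correct and complete: the lift-a-geodesic upper bound plus the projection-and-contraction lower bound handles the case $g\ne g'$, and the two subcases for $g=g'$ are dispatched exactly as they should be (including the observation that an isolated $g$ traps every path inside the layer $^gH\cong H$, and the two-step detour through a neighbouring layer otherwise). The paper itself gives no proof of this proposition --- it is recalled verbatim from~\cite[Proposition 5.12]{hik-2011} --- so there is nothing to compare against; your closing remark that the cleanest option is simply to cite the Handbook is precisely what the authors do, and your self-contained three-case proof is the standard one should a proof be wanted.
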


\begin{lemma}
\label{lemma:6.4}
If $S$ is a monophonic position set of $G \circ H$, then $\pi_{G}(S)$ is a monophonic position set of $G$.
\end{lemma}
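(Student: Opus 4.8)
\textbf{Proof proposal for Lemma~\ref{lemma:6.4}.}

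The plan is to mimic the strategy used for the Cartesian product in Lemma~\ref{lem:projection is in mp}: assume that $\pi_G(S)$ is not in monophonic position in $G$, pick three projected vertices $g_1,g_2,g_3$ with an induced $g_1,g_3$-path $P$ in $G$ through $g_2$, lift these to vertices $(g_1,h_1),(g_2,h_2),(g_3,h_3)\in S$, and build a monophonic $(g_1,h_1),(g_3,h_3)$-path in $G\circ H$ that passes through $(g_2,h_2)$, contradicting that $S$ is in monophonic position. The natural candidate is the layered lift $_{g_1}P$ — wait, this does not quite work since the three vertices of $S$ may sit in different $H$-coordinates; instead I would take the path whose $G$-coordinates trace out $P$ and whose $H$-coordinates are $h_1$ until we reach the $g_2$-layer, then $h_2$, then $h_3$ after $g_2$: that is, concatenate the single vertex changes so that the path is $(g_1,h_1),\dots,(g_2',h_1),(g_2,h_2),(g_3',h_3),\dots,(g_3,h_3)$ following the vertex sequence of $P$ in the first coordinate. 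More carefully: if $P$ is $g_1=a_0,a_1,\dots,a_k=g_3$ with $a_j=g_2$ for some $0<j<k$, let $R$ be the path $(a_0,h_1),(a_1,h_1),\dots,(a_{j-1},h_1),(a_j,h_2),(a_{j+1},h_3),\dots,(a_k,h_3)$ in $G\circ H$, with small adjustments when $g_1=g_2$ or $g_2=g_3$ (then $h_1$ or $h_3$ need not be changed, or one uses a short $H$-detour inside a layer).

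The key steps, in order: (1) verify $R$ is a walk, i.e.\ consecutive vertices are adjacent in $G\circ H$ — this is immediate from the definition of the lexicographic product, since consecutive first coordinates $a_{i},a_{i+1}$ are adjacent in $G$, and the only place where the first coordinate repeats is handled by choosing $h$-values adjacent in $H$ or by inserting at most one intermediate vertex in a single layer; (2) check that $R$ has no chords, i.e.\ it is induced. A chord $(a_s,h)\sim(a_t,h')$ with $s<t-1$ would, by the definition of adjacency in $G\circ H$, require either $a_s\sim a_t$ in $G$ (impossible, since $P$ is induced and the first coordinates along $R$ are exactly the vertices of $P$ in order, each appearing once except possibly with one layer-detour), or $a_s=a_t$ with $h\sim h'$ in $H$ — which can only occur at the handled layer transitions and is excluded by choosing the layer-detour (if any) to be a single vertex. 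One also has to check there is no unwanted edge between the two ``end blocks'' carrying $h_1$ and $h_3$: this needs $g_1\ne g_3$ (which holds, since $g_1,g_3$ are distinct vertices on an induced path of length $\ge 2$) and the fact that $P$ induced forbids $a_s\sim a_t$ for the relevant indices. (3) Observe that $(g_2,h_2)$ lies on $R$ by construction, so $R$ is $S$-bad, giving the contradiction.

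The main obstacle I anticipate is the bookkeeping around the coincidence cases for the first coordinates, exactly as in the Cartesian case (cf.\ the four-case table in Lemma~\ref{lem:projection is in mp}): when $g_1=g_2$, $g_2=g_3$, $g_1=g_3$, or $g_1=g_2=g_3$, the ``path'' $P$ in $G$ degenerates and the lift has to be assembled differently — typically by moving within a single $H$-layer along a monophonic $H$-path between the relevant $h$-coordinates, using Proposition~\ref{prop:Products:LexDist1} to control what counts as monophonic inside a layer (a subtlety absent in the Cartesian setting: inside a layer over a non-isolated vertex, distances are capped at $2$, so one must be careful that a ``monophonic'' path chosen in $H$ remains chord-free in $G\circ H$, which forces the within-layer segments to have length at most two, or more precisely to be edges or non-edges rather than longer induced paths). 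Handling these degenerate cases cleanly, and making sure the chord analysis covers edges between a layer segment and the rest of the path, will be where the real care is needed; everything else is a routine unwinding of the product's adjacency rule.
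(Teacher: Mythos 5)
Your construction in the main case is exactly the paper's intended lift of $P$ (replace each vertex $a_i$ of $P$ by $(a_i,\cdot)$ with the second coordinates chosen to hit $h_1,h_2,h_3$ at $a_0,a_j,a_k$; adjacency of consecutive first coordinates makes this a path, and a chord would force $a_s\sim a_t$ or $a_s=a_t$, both impossible since $P$ is induced), so the proposal is correct and follows essentially the same route. The degenerate cases you worry about at length ($g_1=g_2$, $g_1=g_3$, etc.) are vacuous here: failure of $\pi_G(S)$ to be in monophonic position already yields three \emph{distinct} vertices of $G$ on a common induced path, so no layer detours or distance considerations from Proposition~\ref{prop:Products:LexDist1} are ever needed.
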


\begin{proof}
Suppose for a contradiction that $\pi_G (S)$ is not a monophonic position set in $G$. Then there exist vertices $u_1,u_2,u_3 \in \pi_{G}(S)$ and a monophonic  $u_1,u_3$-path $P$ in $G$ which passes through $u_2$. As $u_1,u_2,u_3 \in \pi_{G}(S)$, there exist vertices $v_1,v_2,v_3\in V(H)$ such that $(u_1,v_1), (u_2,v_2), (u_3,v_3)\in S$. But now it is straightforward to lift the path $P$ to a monophonic $(u_1,v_1),(u_3,v_3)$-path in $G\circ H$ which passes through $(u_2,v_2)$, a contradiction. 
\end{proof}

Let $u\in V(G)$ and and let $P$ be a monophonic path in $H$. Then the isomorphic copy of $P$ in the layer $^uH$ of $G\circ H$ is a monophonic path of $G\circ H$. This fact implies the second lemma of this section.   

\begin{lemma}
\label{lem:proj_in_H}
If $S$ is a monophonic position set of $G\circ H$, then for any $u\in \pi_{G}(S)$, $\pi_{H}({}^uH\cap S)$ is a monophonic position set of $H$.
\end{lemma}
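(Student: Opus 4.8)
The plan is to prove Lemma~\ref{lem:proj_in_H} in the same spirit as Lemma~\ref{lemma:6.4}, but working inside a single $H$-layer and lifting a monophonic path of $H$ into that layer. Fix $u \in \pi_G(S)$ and suppose for contradiction that $\pi_H({}^uH \cap S)$ is \emph{not} a monophonic position set of $H$. Then there are vertices $v_1, v_2, v_3 \in \pi_H({}^uH \cap S)$ and a monophonic $v_1,v_3$-path $P$ in $H$ passing through $v_2$; by definition of $\pi_H({}^uH \cap S)$, the vertices $(u,v_1), (u,v_2), (u,v_3)$ all lie in $S$.

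The key observation is that the copy ${}_uP$ of $P$ in the layer ${}^uH$ is itself a monophonic path of $G\circ H$. Indeed, by the definition of the lexicographic product, two vertices $(u,a)$ and $(u,b)$ of ${}^uH$ are adjacent in $G\circ H$ if and only if $a \sim b$ in $H$; hence the induced subgraph of $G\circ H$ on $\{u\}\times V(H)$ is isomorphic to $H$, and an induced path of $H$ maps to an induced path of $G\circ H$. Therefore ${}_uP$ is a monophonic $(u,v_1),(u,v_3)$-path of $G\circ H$ that passes through $(u,v_2)$, contradicting the assumption that $S$ is a monophonic position set. This contradiction establishes the lemma.

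I do not expect any genuine obstacle here: the only thing to be careful about is the claim that a layer ${}^uH$ is an isometric \emph{and} induced copy of $H$ in $G\circ H$ — more precisely, that no two non-consecutive vertices of ${}_uP$ acquire a chord in $G\circ H$ that was not present in $P$. This is immediate from the adjacency rule for the lexicographic product restricted to a fixed first coordinate, and is exactly the fact flagged in the sentence preceding the lemma statement. Consequently the proof reduces to the one-line lifting argument above.
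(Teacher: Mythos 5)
Your proof is correct and follows exactly the argument the paper intends: the paper itself justifies this lemma by the one-line observation (stated just before the lemma) that the isomorphic copy of a monophonic path of $H$ in the layer ${}^uH$ is a monophonic path of $G\circ H$, which is precisely your lifting argument. Nothing further is needed.
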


Note that $\pi_{H}(S)$ need not be a monophonic position set of $H$. As an example consider $P_2\circ P_3$ (See Fig.~\ref{fig:6.2}). Let $V(P_2)=\{u_1,u_2\}$, $V(P_3)=\{v_1,v_2,v_3\}$ and let $S=\{(u_1,v_1),(u_1,v_2),(u_2,v_2),(u_2,v_3)\}$. As $S$ is a clique, it is a monophonic position set of $P_2\circ P_3$. But $\pi_{P_3}(S)=\{v_1,v_2,v_3\}$ is not a monophonic position set in $P_3$.\\ \\

\begin{figure}[ht!]
	\centering
	\begin{tikzpicture}[x=0.2mm,y=-0.2mm,inner sep=0.2mm,scale=1,thick,vertex/.style={circle,draw,minimum size=8}]
		\node at (0,50) [vertex,fill=black] (x1) {};
            \node at (100,50) [vertex,fill=black] (x2) {};
            \node at (200,50) [vertex] (x3) {};
  
            \node at (0,-50) [vertex] (y1) {};
            \node at (100,-50) [vertex,fill=black] (y2) {};
            \node at (200,-50) [vertex,fill=black] (y3) {};
        
		\path
		(x1) edge (x2)
		(x3) edge (x2)

            (y1) edge (y2)
		(y3) edge (y2)

            (x1) edge (y1)
            (x1) edge (y2)
            (x1) edge (y3)

            (x2) edge (y1)
            (x2) edge (y2)
            (x2) edge (y3)

            (x3) edge (y1)
            (x3) edge (y2)
            (x3) edge (y3)
		
		;
	\end{tikzpicture}
	\caption{A monophonic position set of $G\circ H$ need not project to a monophonic position set in $H$}
	\label{fig:6.2}
\end{figure}
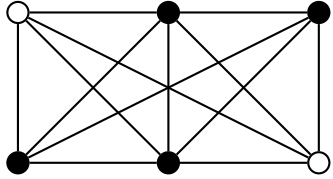   

In view of Lemma~\ref{lemma:2.1}, in the rest of this section, for a monophonic position set $M$ of $G$ we denote the components of $G[M]$ by: $A_1,A_2,\ldots,A_k,B_1,\dots ,B_r $, where $|A_i|\geq 2$ for each $i\in[k]$ and $|B_j|=1$ for each $j\in [r]$. Also we fix $n_M = \sum _{i=1}^k |A_{i}|$ and write $r_M = r$ to emphasise that $r$ is a function of $M$. Then for any monophonic position set $M$ of $G$ we have $|M| = n_M+r_M$. Now we are ready for our main result of this section.

\begin{theorem}
\label{thm:6.5}
Let $G$ be a connected graph of order at least $2$ and let $\mathcal{M}$ be the collection of all monophonic position sets of $G$. Then $$ \mono(G\circ H)=\max_{M\in \mathcal{M}}\{n_M\cdot\omega(H)+ r_M\cdot \mono(H)\}\,.$$
\end{theorem}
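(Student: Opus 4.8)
The plan is to establish the formula by proving the two inequalities separately. For the lower bound, I would fix a monophonic position set $M$ of $G$ realising the maximum on the right, with components $A_1,\dots,A_k,B_1,\dots,B_r$ as in the convention. For each $A_i$ I would pick an arbitrary clique $W_i$ of order $\omega(H)$ in $H$ and place the set $A_i \times W_i$ in $G \circ H$; for each singleton $B_j = \{u_j\}$ I would pick a largest monophonic position set $N_j$ of $H$ and place $\{u_j\} \times N_j$. Call the union $S$; then $|S| = n_M\cdot\omega(H) + r_M\cdot\mono(H)$. I would then verify $S$ is in monophonic position: suppose a monophonic path $P$ in $G\circ H$ contains three vertices of $S$. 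Projecting via $\pi_G$ and using Lemma~\ref{lemma:6.4}-style reasoning, the three $G$-coordinates cannot be three distinct vertices of $M$ (they would lie on a monophonic path of $G$, but $M$ is in monophonic position); so at least two of the three vertices share a $G$-coordinate $u$. If all three share $u$, then $u$ is a singleton $B_j$ (since inside an $A_i$-layer the set is a clique in $H$, which after the $A_i\times W_i$ placement... wait, within a single $^uH$ layer we placed either a clique $W_i$ or a monophonic position set $N_j$) — in the clique case no monophonic path contains three vertices of a clique unless the path has a chord, contradiction; in the $N_j$ case it contradicts that $N_j$ is in monophonic position (the layer induces an isometric-for-monophonic copy of $H$). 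If exactly two share $u$ and $u$ lies in some $A_i$, the two vertices in $^uH$ are adjacent (clique $W_i$), and one checks the induced path cannot revisit or pass through the third vertex without creating a chord, using that $A_i$ is a clique so the $G$-coordinates of the two shared vertices and the third are all mutually adjacent or force chords — here I would argue carefully that adjacency of $u$ to the third $G$-coordinate (which holds whenever they are distinct and the path between them has positive length) produces a chord on $P$.

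For the upper bound, let $S$ be a largest monophonic position set of $G \circ H$ and set $M = \pi_G(S)$, which is a monophonic position set of $G$ by Lemma~\ref{lemma:6.4}. Write $G[M]$ as the disjoint union of cliques $A_1,\dots,A_k,B_1,\dots,B_r$ per Lemma~\ref{lemma:2.1}. I would bound $|S \cap {}^uH|$ for each $u \in M$. For a singleton component $B_j = \{u_j\}$, Lemma~\ref{lem:proj_in_H} gives $|S \cap {}^{u_j}H| = |\pi_H({}^{u_j}H \cap S)| \leq \mono(H)$. For a vertex $u$ inside a component $A_i$ of size $\geq 2$, I claim $|S \cap {}^uH| \leq \omega(H)$: if not, $\pi_H({}^uH \cap S)$ has more than $\omega(H)$ vertices, hence contains two non-adjacent vertices $h, h'$ of $H$; pick any $u' \in A_i$ with $u' \sim u$ and any vertex $(u'', h'') \in S$ with $u'' \neq u$ (exists since $S$ is not confined to one layer when $|S| > \omega(H \text{-clique stuff})$... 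I should handle the small case $|S|\le\omega(H)$ trivially). Then $(u,h), (u',?), (u,h')$ — actually the cleaner argument: $(u,h)$ and $(u,h')$ together with a neighbour in another layer give a bad path since $h \not\sim h'$ in $H$ means the path $(u,h),(u',*),(u,h')$ through an $A_i$-neighbour $u'$ is monophonic (no chord, as $h\not\sim h'$), and it can be extended or already contains a third vertex of $S$ in layer $u'$ or $u$. Summing, $|S| = \sum_i |S \cap {}^uH\text{-part}| \le n_M\cdot\omega(H) + r_M\cdot\mono(H) \le \max_{M'\in\mathcal M}\{n_{M'}\omega(H) + r_{M'}\mono(H)\}$.

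\textbf{Main obstacle.} The delicate point, and where I expect to spend the most effort, is the upper-bound claim that $|S \cap {}^uH| \leq \omega(H)$ whenever $u$ lies in a component $A_i$ of $G[M]$ with $|A_i| \geq 2$. The subtlety is that a monophonic path of $H$ inside the layer $^uH$ is monophonic in $G\circ H$, but the converse fails (Proposition~\ref{prop:Products:LexDist1} shows distances collapse to $2$), so $\pi_H({}^uH\cap S)$ need only be in general position within $^uH$'s induced structure, not a monophonic position set of $H$ — indeed Fig.~\ref{fig:6.2} warns exactly about this. So I cannot simply invoke $\mono(H)$; instead I must use the existence of the second vertex $u'\in A_i$ adjacent to $u$ to "detour" out of the layer and back, converting any two non-adjacent vertices $h,h'\in\pi_H({}^uH\cap S)$ into an $S$-bad path. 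The careful bookkeeping is ensuring this detour path is genuinely induced (no chords from the detour vertex $(u',h'')$ back to layer vertices — which is where $h\not\sim h'$ is used) and that it meets $S$ in at least three vertices. I would also need to separately treat the trivial regime $|S| \le \omega(H)$ and confirm that in that case the formula's right-hand side, evaluated at a suitable $M$ (e.g. an edge of $G$, giving $n_M = 2$, or a single vertex giving $r_M=1, \mono(H)\ge\omega(H)$), already dominates, so the bound is not vacuous.
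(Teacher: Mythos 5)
Your overall strategy coincides with the paper's: project to $G$ via Lemma~\ref{lemma:6.4}, split $M=\pi_G(S)$ into clique components, bound each layer over a non-singleton component by $\omega(H)$ and each singleton layer by $\mono(H)$ via Lemma~\ref{lem:proj_in_H}, and realise the maximum by placing $\omega(H)$-cliques over the $A_i$ and maximum monophonic position sets over the $B_j$. Your upper bound is essentially the paper's argument: for $u\in A_i$ with $|A_i|\ge 2$ and non-adjacent $h,h'\in\pi_H({}^uH\cap S)$, any $(u',v')\in S$ with $u'\in A_i\setminus\{u\}$ yields the induced $S$-bad path $(u,h),(u',v'),(u,h')$, chordless because $h\not\sim h'$. (A side remark: your ``main obstacle'' paragraph claims $\pi_H({}^uH\cap S)$ need not be a monophonic position set of $H$; that is backwards --- Lemma~\ref{lem:proj_in_H} says it always is, exactly because monophonic paths of $H$ lift into the layer. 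The real issue, which you do resolve, is only that $\mono(H)$ is too weak a bound over non-singleton components.)

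The genuine gap is in verifying that your constructed set is in monophonic position, in the case where exactly two of the three vertices, say $(u,v_1)$ and $(u,v_2)$, share a $G$-coordinate and the third is $(u_3,v_3)$ with $u_3\neq u$. Your assertion that $u$ is adjacent to $u_3$ ``whenever they are distinct and the path between them has positive length'' is false: $u_3$ may lie in a different component of $G[M]$ and hence be non-adjacent to $u$. You also omit the subcase where $u$ is a singleton $B_j$, in which $v_1$ and $v_2$ need not be adjacent, so the ``three vertices of a clique'' argument is unavailable. The paper's fix works uniformly: arrange the bad path so that $(u,v_1)$ and $(u_3,v_3)$ are its endpoints with $(u,v_2)$ between them, and let $(u',v')$ be the first vertex on the $(u,v_2),(u_3,v_3)$-subpath with $u'\neq u$; then $u'\sim u$ in $G$, so $(u',v')\sim(u,v_1)$ is a chord. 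A similarly explicit argument is needed for your ``all three in one layer'' case: an induced path between two vertices of a layer either stays in the layer or has length exactly two with its middle vertex outside the layer, and both alternatives exclude a third $S$-vertex of that layer from the path; the phrase ``isometric-for-monophonic copy'' does not substitute for this.
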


\begin{proof}  
Let $S$ be a monophonic position set of $G\circ H$. Set $M = \pi _G(S)$, which is a monophonic position set of $G$ by Lemma~\ref{lemma:6.4}. To show that $|S| \leq \max_{M\in \mathcal{M}}\{n_M\cdot\omega(H)+ r_M\cdot \mono(H)\} $ it suffices to show that if $u \in M$ lies in a component $A_i$ of $G[M]$ of order greater than one, then $\pi _H(^uH \cap S)$ is a clique. Suppose for a contradiction that there are $v_1,v_2 \in {}^uH \cap S$ such that $v_1 \not \sim v_2$ in $H$. Let $u^{\prime } \in A_i \setminus \{ u\}$ and $v^{\prime }$ be a vertex of $H$ such that $(u^{\prime },v^{\prime }) \in S$. Then $(u,v_1),(u^{\prime },v^{\prime }),(u,v_2)$ would be a bad monophonic path and the claimed bound holds.

For the converse, we show that any subset $S$ of $V(G \circ H)$ with the properties that \begin{itemize}
    \item $M = \pi _G(S)$ is in monophonic position in $G$,
    \item $\pi _H(^uH \cap S)$ is a clique of $H$ if $u$ belongs to a component of $G[M]$ of order greater than one, and
    \item $\pi _H(^uH \cap S)$ is a monophonic position set of $H$ if $u$ is an isolated vertex of $G[M]$,
\end{itemize}  
then $S$ is in monophonic position in $G \circ H$. Let $S$ be such a subset, set $M = \pi _G(S)$ and define the sets $A_i$ and $B_j$ as above. Assume for a contradiction that there are vertices $(u_1,v_1)$, $(u_2,v_2)$ and $(u_3,v_3)$ of $S$ and a monophonic $(u_1,v_1),(u_3,v_3)$-path $P$ of $G \circ H$ passing through $(u_2,v_2)$.

If $u_1,u_2,u_3$ are distinct vertices of $G$, then $\pi _G(P)$ would be a monophonic path of $G$ containing $u_1,u_2,u_3$, contradicting our assumption that $M$ is in monophonic position in $G$. Suppose that $u_1 = u_3$. Then we must have $v_1 \not \sim v_3$ in $H$, for otherwise $P$ would have length just one; therefore $u_1 = u_3 \in B_j$ for some $j \in [r]$. We have $d_{G \circ H}((u_1,v_1),(u_3,v_3)) = 2$. Then either $u_2 = u_1$ and $v_1,v_2,v_3$ is a monophonic path in $H$, which contradicts Lemma~\ref{lem:proj_in_H}, or else $u_2 \sim u_1$, which is impossible since $u_1 \in B_j$. 

Hence we can assume that $u_1 = u_2 \neq u_3$. Then if $(u^{\prime },v^{\prime })$ is the first vertex on the $(u_2,v_2),(u_3,v_3)$-subpath of $P$ with $u^{\prime } \neq u_2$ there is a chord $(u_1,v_1) \sim (u^{\prime },v^{\prime })$, so that again $P$ is not monophonic. Thus $S$ is in monophonic position and the monophonic position number of $G \circ H$ is the maximum value of $n_M\omega (H)+r_M\mono (H)$ over all such sets.
\end{proof}

It follows from Lemma~\ref{lemma:2.1} that any monophonic position set of a triangle-free graph $G$ is either an independent set or consists of a pair of adjacent vertices, so if the order of the graph is $n \geq 3$ we have $\mono (G) = \mono _i (G)$. As $\omega (H) \leq \mono (H)$ this provides an exact value for $\mono (G \circ H)$ when $G$ is triangle-free.
\begin{corollary}
\label{corollary:3.6}
If $G$ be a connected triangle-free graph with order $n \geq 3$ and $H$ is a connected graph, then  $\mono (G\circ H) = \mono (G)\cdot \mono(H)$.
\end{corollary}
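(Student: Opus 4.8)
The plan is to derive Corollary~\ref{corollary:3.6} directly from Theorem~\ref{thm:6.5} by analysing what the quantity $\max_{M\in\mathcal M}\{n_M\cdot\omega(H)+r_M\cdot\mono(H)\}$ becomes when $G$ is triangle-free of order $n\geq 3$. The first step is the structural observation, already noted in the excerpt via Lemma~\ref{lemma:2.1}: since $G$ is triangle-free, every maximal clique in $G$ has order at most $2$, so the components $A_i$ of $G[M]$ (which are cliques of order $\geq 2$) all have order exactly $2$, and each such component is simply an edge of $G$. Thus for any monophonic position set $M$ of $G$ we have $n_M=2k$ where $k$ is the number of such edges, and $|M|=2k+r_M$. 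The next step is to record that $\mono(G)=\mono_i(G)$ when $n\geq 3$: if $M$ is a monophonic position set containing an edge, then by Lemma~\ref{lemma:2.1} (applied with at least two components, since $G$ has order $\geq 3$ and is connected, so a single $K_2$ is never a maximal monophonic position set in a connected graph on $\geq 3$ vertices — we should double-check this edge case) any two vertices of that edge would need a common neighbour outside $M$, but two adjacent vertices in a triangle-free graph have no common neighbour at all; hence a maximum monophonic position set is independent and $\mono(G)=\mono_i(G)$.

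With these two facts, the second step is to evaluate the maximum. For the upper bound direction, take any $M\in\mathcal M$ with $n_M=2k$, $r_M=r$, so $|M|=2k+r\leq \mono(G)$. Then
\[
n_M\cdot\omega(H)+r_M\cdot\mono(H)=2k\,\omega(H)+r\,\mono(H)\leq 2k\,\mono(H)+r\,\mono(H)=(2k+r)\mono(H)\leq \mono(G)\cdot\mono(H),
\]
using $\omega(H)\leq\mono(H)$. Taking the maximum over $M$ gives $\mono(G\circ H)\leq\mono(G)\cdot\mono(H)$. For the matching lower bound, choose $M$ to be a maximum independent monophonic position set of $G$, so $k=0$, $r_M=|M|=\mono_i(G)=\mono(G)$, and $n_M=0$; then the expression equals $\mono(G)\cdot\mono(H)$, so the maximum is at least this value. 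Combining the two inequalities yields the claimed equality $\mono(G\circ H)=\mono(G)\cdot\mono(H)$.

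The only genuinely delicate point — and the step I expect to be the main obstacle — is verifying that $\mono(G)=\mono_i(G)$ for every connected triangle-free $G$ of order $n\geq 3$, i.e. that a maximum monophonic position set can always be taken independent. The subtlety is that a monophonic position set of $G$ might a priori realise its maximum while containing one isolated $K_2$ component (this does not contradict Lemma~\ref{lemma:2.1} when $k=1$ and $r=0$, since then $G[M]$ has only one component and the common-neighbour clause does not apply). However, if $G$ is connected with $n\geq 3$ and $M=\{a,b\}$ with $a\sim b$, then $M$ is not maximal: $b$ has a neighbour $c\neq a$ (by connectivity and $n\geq 3$), $c\not\sim a$ since $G$ is triangle-free, and one checks $\{a,c\}$ — or rather that $\{a,b,c\}$ fails but some independent triple or larger set works — here it is cleanest to argue that $\mono(G)\geq 3$ fails is impossible to conclude directly, so instead we argue: any monophonic position set with $|M|\geq 3$ has $k=0$ by the Lemma~\ref{lemma:2.1} common-neighbour obstruction, hence is independent; and $\mono(G)\geq 3$ whenever $n\geq 3$ and $G$ is connected triangle-free, for instance because the leaves of a spanning tree, or a suitable independent set, give three vertices in monophonic position — this last claim needs a short separate verification (e.g. in a triangle-free graph on $n\geq 3$ vertices, $\alpha(G)\geq 3$ unless $G$ is a path or cycle on $3$ vertices, and $P_3$ already has $\mono(P_3)=3$, while $C_3$ is not triangle-free). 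Once $\mono(G)\geq 3$ is secured, every maximum monophonic position set has $k=0$ and is independent, giving $\mono(G)=\mono_i(G)$, and the corollary follows. I would write this edge-case analysis out carefully as the heart of the proof and treat the arithmetic of the previous paragraph as routine.
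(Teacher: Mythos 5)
Your overall strategy is the same as the paper's (reduce to Theorem~\ref{thm:6.5}, use $\omega(H)\leq\mono(H)$ for the upper bound and a maximum independent monophonic position set of $G$ for the lower bound), and that arithmetic is fine. But the step you yourself single out as ``the heart of the proof'' contains a genuine error: the claim that $\mono(G)\geq 3$ for every connected triangle-free graph of order $n\geq 3$ is false. For example $\mono(P_3)=2$ (the whole of $P_3$ is a monophonic path through all three vertices), contrary to your assertion that $\mono(P_3)=3$; likewise $\mono(P_m)=\mono(C_m)=2$ for all $m\geq 4$, so long paths and cycles are further counterexamples. The auxiliary inference is also invalid: $\alpha(G)\geq 3$ does not give $\mono(G)\geq 3$, since an independent set need not be in monophonic position (the two endpoints and the middle vertex of $P_5$ are independent but lie on a common induced path). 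So as written, your verification of $\mono(G)=\mono_i(G)$ does not go through for the graphs with $\mono(G)=2$.

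The gap is easily repaired, and the repair is what the paper implicitly does. By Lemma~\ref{lemma:2.1}, a monophonic position set $M$ of a triangle-free graph with at least two components in $G[M]$ cannot contain an edge (two adjacent vertices would need a common neighbour, i.e.\ a triangle), so $M$ is either independent or a single adjacent pair. If $\mono(G)\geq 3$ every maximum monophonic position set is therefore independent; if $\mono(G)=2$, then since a connected triangle-free graph of order $n\geq 3$ is not complete it contains a non-adjacent pair, which is an independent monophonic position set of size $2$. In both cases $\mono_i(G)=\mono(G)$, and your computation of the maximum in Theorem~\ref{thm:6.5} then yields the corollary. You do not need $\mono(G)\geq 3$ anywhere.
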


\begin{corollary}
\label{corollary:3.7}
If $H$ is a connected graph and $n\ge 2$, then \[ \mono (K_n\circ H)=\max \{ n\cdot \omega(H),\mono (H)\} .\]
\end{corollary}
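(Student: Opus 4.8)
The plan is to derive Corollary~\ref{corollary:3.7} directly from Theorem~\ref{thm:6.5} by computing the relevant maximum over all monophonic position sets of $K_n$. First I would observe that since $K_n$ is a complete graph, $G[M]$ is a clique for every subset $M \subseteq V(K_n)$, and in particular for every monophonic position set $M$ of $K_n$. Hence the decomposition of $G[M]$ from Lemma~\ref{lemma:2.1} has exactly one nontrivial component $A_1 = M$ when $|M| \geq 2$, giving $n_M = |M|$ and $r_M = 0$; and when $|M| = 1$ we instead have $n_M = 0$ and $r_M = 1$. (The empty set contributes nothing, and $|M| \leq n$ always.)

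Next I would plug these two cases into the formula $\mono(K_n \circ H) = \max_{M \in \mathcal{M}} \{ n_M \cdot \omega(H) + r_M \cdot \mono(H) \}$. For monophonic position sets $M$ with $|M| = m \geq 2$, the contribution is $m \cdot \omega(H)$, which is maximised by taking $m = n$, yielding $n \cdot \omega(H)$; note every such $M$ with $|M| \le n$ is indeed a monophonic position set of $K_n$ since any subset of a clique is in monophonic position. For singleton sets $M$, the contribution is $\mono(H)$. Taking the maximum over all cases gives $\max\{ n \cdot \omega(H), \mono(H) \}$, which is the claimed value. One should also note that the singleton case is subsumed whenever $n \cdot \omega(H) \geq \mono(H)$, but since $\mono(H)$ can exceed $n\cdot\omega(H)$ (e.g. $n$ small and $H$ with large $\mono$-number relative to its clique number), the maximum with $\mono(H)$ genuinely matters.

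This argument is essentially a direct specialisation, so there is no serious obstacle; the only point requiring a moment's care is the bookkeeping of the two regimes $|M| \geq 2$ versus $|M| = 1$ in the definition of $n_M$ and $r_M$, and confirming that the case $|M| = n$ is always admissible (it is, being the whole vertex set of a clique). I would therefore keep the proof to two or three sentences: state that the monophonic position sets of $K_n$ are exactly its subsets, split into the two size regimes, apply Theorem~\ref{thm:6.5}, and read off the maximum.
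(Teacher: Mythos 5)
Your proposal is correct and is exactly the specialisation of Theorem~\ref{thm:6.5} that the paper intends (the corollary is stated there without further argument): since $K_n$ contains no induced path on three vertices, every subset is a monophonic position set, and your bookkeeping of $n_M$ and $r_M$ in the two regimes $|M|\ge 2$ and $|M|=1$ is accurate. Nothing is missing.
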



\section*{Acknowledgements}

Sandi Klav\v zar was supported by the Slovenian Research and Innovation Agency ARIS (research core funding P1-0297 and projects  N1-0285, N1-0355, N1-0431). James Tuite gratefully acknowledges funding support from EPSRC grant EP/W522338/1. The authors also thank Grahame Erskine for helpful discussions of the paper.

\section*{Declaration of interests}

The authors declare that they have no known competing financial interests or personal relationships that could have appeared to influence the work reported in this paper.

\section*{Data availability}

Our manuscript has no associated data.


\end{document}